\documentclass[11pt]{amsart}

\usepackage{amsmath, amsthm, amsfonts, amssymb, latexsym, graphicx, dcpic, pictex}

\newtheorem{thm}{Theorem}[section]
\newtheorem{lemma}[thm]{Lemma}
\newtheorem{prop}[thm]{Proposition}
\newtheorem{cor}[thm]{Corollary}

\newtheorem*{unno_thm}{Theorem~\ref{main_thm}}

\theoremstyle{definition}
\newtheorem{defn}[thm]{Definition}
\newtheorem{example}[thm]{Example}
\newtheorem{conv}[thm]{Convention}
\newtheorem{bold_notation}[thm]{Notation}

\newtheorem*{rem}{Remark}

\DeclareMathOperator*{\colim}{colim}
\DeclareMathOperator*{\hocolim}{hocolim}

\newcommand{\ds}{\displaystyle}
\newcommand{\mcA}{\mathcal{A}}
\newcommand{\mcB}{\mathcal{B}}
\newcommand{\mcC}{\mathcal{C}}
\newcommand{\mcD}{\mathcal{D}}
\newcommand{\mcE}{\mathcal{E}}
\newcommand{\mcI}{\mathcal{I}}
\newcommand{\mcL}{\mathcal{L}}
\newcommand{\mcM}{\mathcal{M}}
\newcommand{\mcO}{\mathcal{O}}
\newcommand{\mcP}{\mathcal{P}}
\newcommand{\mcQ}{\mathcal{Q}}
\newcommand{\mcR}{\mathcal{R}}
\newcommand{\mcS}{\mathcal{S}}
\newcommand{\mcU}{\mathcal{U}}
\newcommand{\op}{\mathrm{op}}
\newcommand{\Iso}{\mathrm{Iso}}
\newcommand{\Sets}{\mathrm{Sets}}
\newcommand{\Hom}{\mathrm{Hom}}
\newcommand{\ho}{\mathrm{ho}}

\newcommand{\bfn}{\mathbf{n}}
\newcommand{\kplus}{\mathbf{k}_+}
\newcommand{\mplus}{\mathbf{m}_+}
\newcommand{\nplus}{\mathbf{n}_+}
\newcommand{\rplus}{\mathbf{r}_+}
\newcommand{\splus}{\mathbf{s}_+}
\newcommand{\ilcolim}{\mathrm{colim}}
\newcommand{\ilhocolim}{\mathrm{hocolim}}
\newcommand{\sk}{\mathrm{sk}}
\newcommand{\dom}{\mathrm{dom}}

\begin{document}

\title[Model category extensions of Pirashvili-S{\l}omi\'{n}ska theorems]{Model category extensions of the Pirashvili-S{\l}omi\'{n}ska theorems}

\author{Randall D. Helmstutler}

\address{Department of Mathematics\\
         University of Mary Washington\\
         1301 College Avenue\\
         Fredericksburg, VA  22401}

\email{rhelmstu@umw.edu}

\keywords{model category, Quillen equivalence, Morita equivalence, functor category}

\subjclass[2000]{55U35,18A25}

\date{13 May 2008}

\begin{abstract}
\noindent  We describe the class of \emph{semi-stable} model categories, which generalize the equivalence of finite products and coproducts
in abelian and stable model categories, and use this to establish Morita equivalences among categories
of functors.  We provide a construction of pairs of small categories---known as \emph{conjugate pairs}---whose
associated categories of diagrams are Quillen equivalent in the semi-stable setting.  We frame our development in the context of Morita theory, following S{\l}omi\'{n}ska's work on
similar questions for categories of functors enriched over and taking values in $R$-modules.
\end{abstract}

\maketitle


\section{Introduction}\label{intro_sect}

\subsection{Background}

Given a model category $\mcC$ and any small category $\mcD$, one
can form the category $[\mcD, \mcC]$ of functors
from $\mcD$ to $\mcC$.  When $\mcC$ is cofibrantly generated, this
category of diagrams inherits a model structure.  A basic question is
then to determine when two such model categories of diagrams in
$\mcC$ are Quillen equivalent.  In this article we develop an
approach to this problem for the case when $\mcC$ shares characteristics common to
abelian and stable model categories.  Precisely, in both types of categories the natural
map $X \vee Y \rightarrow X \times Y$ is an equivalence of the right kind:  an isomorphism
in the abelian case and a weak equivalence in the stable case.  Suitably generalized, this leads
to the notion of a \emph{semi-stable model category}.

The ideas of classical Morita theory have been used for some time
now in describing equivalences of categories of functors; an early example may be found in \cite{DCN}.  More recently,
S{\l}omi\'{n}ska has developed a robust set of sufficient conditions that yield Morita equivalences for
categories of functors taking values in $R$-modules \cite{JS}.  Up to this point, such work has been primarily
an algebraist's affair.  Our approach is to deduce a Morita theorem for diagrams in a
semi-stable model category by suitably adapting a portion of S{\l}omi\'{n}ska's framework.

The genesis of both S{\l}omi\'{n}ska's original work and our
current homotopy-theoretic version may be found in the results of
Pirashvili in \cite{TP}.  Let $\mcE$ denote the category with
objects the sets $\bfn = \{1, 2, \ldots, n\}$ and with the
surjective maps as the morphisms (where $\mathbf{0}$ is the empty
set).  Let $\Gamma$ denote the category with objects the finite
based sets $\nplus = \{0, 1, 2, \ldots, n\}$ and morphisms the
based maps, where $0$ acts as the basepoint. Paraphrased,
Pirashvili proves in \cite{TP} that the functor categories
$[\Gamma^\op, \mcC]$ and $[\mcE^\op, \mcC]$ are equivalent when
$\mcC$ is an abelian category.  The equivalence is given by the
cross effect construction of Eilenberg and MacLane in \cite{EMcLII}.

At about the same time, similar equivalences were being noticed in stable homotopy theory, again arising from a (homotopical)
cross effect.  The categories $\mcE$ and $\Gamma$ appear together in several instances, such as in \cite{A1} and \cite{BM}.
In his work on generalizations of the infinite symmetric product to categories of $S$-algebras, Kuhn \cite{K1} explicitly
mentions an apparent connection to Pirashvili's algebraic equivalence of categories.  All of this suggests that the categories $[\Gamma^\op, \mcC]$ and $[\mcE^\op, \mcC]$ are Quillen equivalent when $\mcC$
is a stable model category.  This is indeed the case, as proven and generalized in the author's doctoral dissertation (independent
of the work of S{\l}omi\'{n}ska).

At this point it is an attractive idea to attempt to unify the algebraic theorem
with the stable homotopy analogue.  This is the motivation for considering semi-stable model categories:  our
main result proves both theorems at once.  Hence en route to the proof of the stable homotopy result we recover the abelian
case as well, but we do so via model-categorical methods.  While
we would never maintain that the techniques of model categories are a more sensible approach to abelian category questions, the point
is that we can prove the stable homotopy version of the theorem while capturing the similarities between these disparate
classes of categories.

S{\l}omi\'{n}ska's Morita theory \cite{JS} is a generalization of the behavior exhibited by the categories $\mcE$ and $\Gamma$.
She gives conditions on pairs $(\mcB, \mcA)$ of small categories enriched over $R$-modules so that there is an adjoint
equivalence between $[\mcB^\op, R\mathrm{-mod}]$ and $[\mcA^\op, R\mathrm{-mod}]$.  Suitably
enriched, the pair $(\Gamma, \mcE)$ is an example of such a Morita equivalent pair of small categories.  Not only does this succeed in
recovering Pirashvili's theorem (at least for the category of $R$-modules), it also lends itself to numerous additional
examples.

In this article we give similar conditions on pairs $(\mcB, \mcA)$ of small categories so that the associated
functor categories $[\mcB^\op, \mcC]$ and $[\mcA^\op, \mcC]$ are Quillen equivalent when $\mcC$ is semi-stable.
Like S{\l}omi\'{n}ska's work, this is achieved by abstracting certain relations between the categories $\mcE$ and $\Gamma$, although
we take a different approach necessitated by the non-additive nature of model categories.  In S{\l}omi\'{n}ska's framework
all categories and functors were additive, and because of this we will necessarily lose a couple of her examples.

Fortunately, the main feature of the pair $(\Gamma, \mcE)$ to be generalized is not hard to describe.  In fact, it closely parallels the First Isomorphism Theorem
in group theory.  Every $\Gamma$-map $\gamma: \mplus \rightarrow \nplus$ admits a uniquely determined three-fold
factorization
\[
\begindc{0}[3]
\obj(-9,8)[1]{$\mplus$} \obj(9,8)[2]{$\nplus$} \obj(-9,-8)[3]{$\rplus$}
\obj(9,-8)[4]{$\splus$} \mor{1}{2}{$\gamma$} \mor{3}{4}{$\alpha$}
\mor{1}{3}{$q$}[\atright, \solidarrow]
\mor{4}{2}{$i$}[\atright, \solidarrow] 
\enddc
\]
where
\begin{itemize}
\item  $q$ is the quotient map that collapses the ``kernel" $\gamma^{-1}(0)$ of $\gamma$,
\item  $i$ represents the inclusion of the image of $\gamma$ into its codomain, and
\item  $\alpha$ is the unique epimorphism making the diagram commute.
\end{itemize}
Note that $\alpha$ has the additional property that only the basepoint is mapped to the basepoint.  Thus we may forget
the basepoints and view $\alpha$ as a morphism $\alpha: \mathbf{r} \rightarrow \mathbf{s}$ in $\mcE$.

Thus we see that every $\Gamma$-map gives rise to a uniquely determined map in $\mcE$, and we can keep track of this
assignment by recording the ``cokernel" $q$ and the inclusion map $i$.  Furthermore, note that the category of
inclusion maps is in some sense dual to the category of quotients.  Abstracting all of this structure leads to our notion of a
\textit{conjugate pair} of small categories, of which $(\Gamma, \mcE)$ is the prime example.  This is made precise in
Section~\ref{conj_pair_section}.

The primary goal of this paper is to show that every conjugate pair is in fact a Morita equivalent
pair of small categories in the semi-stable setting.  Paraphrasing to avoid some currently undefined notation, our main result
asserts the following:

\begin{unno_thm}
Suppose that $\mcC$ is a model category.  Every conjugate pair $(\mcB, \mcA)$ of small categories gives rise to
an adjoint pair
\[
\begindc{0}[3]
\obj(-12,0)[1]{$[\mcB^\op, \mcC]$}
\obj(12,0)[2]{$[\mcA^\op, \mcC]$}
\obj(-8,1)[3]{} \obj(8,1)[4]{} \obj(-8,-1)[5]{} \obj(8,-1)[6]{} \mor{3}{4}{$\mcL$} \mor{6}{5}{$\mcR$}
\enddc
\]
and this is a Quillen equivalence when $\mcC$ is semi-stable.
\end{unno_thm}

This article is a generalization of several results found in the author's doctoral dissertation, written under the
direction of Prof.\ Nicholas Kuhn at the University of Virginia.  We are grateful for all the helpful suggestions he has
offered.

\subsection{Organization of the paper}

In Section~\ref{Morita_subsection} we lay our foundation for Morita equivalence for categories of functors.  We will
take classical Morita theory as inspiration for both the terminology and the notation.  Section~\ref{mod_cat_subsection} sets
in stone some blanket technical assumptions we must impose on model categories for the purposes of this paper, and
cofibrant generation is quickly reviewed.  In Section~\ref{stable_subsection} we give the definition of a \emph{semi-stable
model category}.

In Section~\ref{index_cat_sect} we begin the development of the small category side of the story, where we carefully
define the notion of an \textit{indexing category}.  Such categories will play the role of the category of inclusions
in the three-fold factorizations.  This type of category will also index some fundamental (co)product decompositions,
hence the name.

Section~\ref{conj_pair_section} formalizes three-fold factorization to the notion of a \textit{conjugate pair} of small
categories; plenty of examples follow in Section~\ref{examples_section}.  The bulk of the hard work lies in Section~\ref{nat_section},
where we show that a conjugate pair allows for the creation of a special functor, called the \textit{regular bimodule}.  Here,
the term \textit{bimodule} means a functor of the form $P: \mcA^\op \times \mcB \rightarrow \Sets_*$ (we are using
$\Sets_*$ to denote the category of based sets).  The regular bimodule conveniently encodes many naturality properties which are necessary for the proof of our
theorem, and these are detailed in Section~\ref{nat_section}.

Every bimodule creates an adjoint pair between functor categories.  For a regular bimodule, the resulting right adjoint
admits a nice product decomposition; this is the content of Section~\ref{adjoint_section}.  In Section~\ref{free_section},
we analyze the behavior of free functors and their pushouts under this adjunction.  The remainder of the paper is devoted
to proving Theorem~\ref{main_thm}, our main result.


\section{Categorical Preliminaries}\label{cat_section}

\subsection{The Morita theory context}\label{Morita_subsection}

For this discussion, let us fix a pointed category $\mcC$ having all limits and colimits.  In the applications in this article,
$\mcC$ will always be a pointed model category.

The constructions involved in our Morita theory naturally arise from products and coproducts indexed by based sets.  As an
initial technicality we must be clear about how we will handle the basepoint.  Given a based set $S$ and an object $C$ of $\mcC$, the product
$\prod_{S} C$ will always stand for the ordinary product of copies of $C$, one copy for each non-basepoint element of $S$.  That is,
the factor corresponding to the basepoint of $S$ will always be taken to be the zero object of $\mcC$.  The same convention
applies to coproducts $\bigvee_S C$ indexed by based sets (as all of our categories will be pointed, we will be using the
wedge symbol $\vee$ for the coproduct).  It will be convenient to denote $\bigvee_S C$ by $C \otimes S$ on occasion.

Let us now fix a small category $\mcA$.  Given functors $G: \mcA^\op \rightarrow \mcC$ and $P: \mcA^\op \rightarrow \Sets_*$
we can form a new functor $\mcA^\op \times \mcA \rightarrow \mcC$ by the assignment
\[
(x,y) \mapsto \prod_{P(y)} G(x).
\]
We define $\Hom^\mcA(P,G)$ to be the end of this functor (see \cite{SM} for a refresher on ends, if necessary).  We are writing
$\mcA$ as a superscript to distinguish this from a set of morphisms in a category.  It is entirely formal that this
construction has all of the expected properties of a $\Hom$-like object:  functoriality of the expected variances, correct
behavior with (co)products, and a Yoneda lemma.  For example, if $P = \mcA(-,a)_+$ is a representable (think:  free)
functor, we have a natural isomorphism $\Hom^\mcA(P,G) = G(a)$.

Likewise, we can use coends of coproducts to define a tensor product of functors.  Fix a small category $\mcB$ and functors
$F: \mcB^\op \rightarrow \mcC$ and $Q: \mcB \rightarrow \Sets_*$.  From this we can form a functor $\mcB^\op \times \mcB
\rightarrow \mcC$ via
\[
(x,y) \mapsto F(x) \otimes Q(y),
\]
the coend of which we denote $F \otimes_\mcB Q$.  Again, this has all of the expected properties of a tensor product.

The starting point in classical Morita theory is the simple fact that every bimodule gives an adjoint pair between categories
of modules.  In our situation, every functor of the form $P: \mcA^\op \times \mcB \rightarrow \Sets_*$ will give rise
to an adjoint pair between categories of contravariant functors; we will therefore call such functors \textit{bimodules}.

To that end, fix a bimodule $P: \mcA^\op \times \mcB \rightarrow \Sets_*$.  Our $\Hom^\mcA(-,-)$ and $- \otimes_\mcB -$ constructions
yield an adjoint pair
\[
\begindc{0}[3]
\obj(-12,0)[1]{$[\mcB^\op, \mcC]$}
\obj(12,0)[2]{$[\mcA^\op, \mcC]$}
\obj(-8,1)[3]{} \obj(8,1)[4]{} \obj(-8,-1)[5]{} \obj(8,-1)[6]{} \mor{3}{4}{$\mcL$} \mor{6}{5}{$\mcR$}
\enddc
\]
described as follows.  Given $F: \mcB^\op \rightarrow \mcC$ we define the functor $\mcL F: \mcA^\op \rightarrow \mcC$
by
\[
\mcL F(a) = F \otimes_\mcB P(a,-).
\]
Dually, given $G: \mcA^\op \rightarrow \mcC$ we define $\mcR G: \mcB^\op \rightarrow
\mcC$ by
\[
\mcR G(b) = \Hom^\mcA(P(-,b),G).
\]
It is entirely formal that $(\mcL, \mcR)$ is
an adjoint pair.  All of the Quillen equivalences in our work will arise from this type of adjunction.

\subsection{Model categories}\label{mod_cat_subsection}

We will take as the root definition of \textit{model category} that which is set forth in Chapter 1 of Hovey's \textit{Model
Categories} \cite{MH}.  In particular, we will assume that the functorial factorizations are fixed as part of the underlying model
structure (others assume that such factorizations merely exist).  We will also be required to make some additional
technical assumptions.  For brevity's sake, we will bundle these assumptions into the term \textit{model category}.

\begin{conv}\label{conv}
In this paper, the term \textit{model category} shall always mean
a pointed, cofibrantly generated model category $\mcC$ as in
\cite{MH}, possessing the following additional properties:
\begin{itemize}
\item  $\mcC$ is proper.
\item  The projections $X \stackrel{p_X}{\longleftarrow} X \times Y \stackrel{p_Y}{\longrightarrow} Y$ are always fibrations.
\item  If $f: A \rightarrow B$ and $g: C \rightarrow D$ are acyclic fibrations then the map $f p_A \times
g p_C: A \times C \rightarrow B \times D$ is a weak equivalence.
\item  If $f: A \rightarrow B$ and $g: C \rightarrow D$ are acyclic cofibrations then the map
$i_Bf \vee i_Dg: A \vee C \rightarrow B \vee D$ is a weak equivalence.
\end{itemize}
\end{conv}

For the most part these assumptions are fairly mild.  The properness assumption will guarantee that we have well-behaved homotopy
pullbacks and pushouts, as in \cite[Chapter 13]{PH}.  Note that all projections
will be fibrations whenever all objects in $\mcC$ are fibrant.  We also gain the following two results which will be
needed in Section~\ref{adjoint_section}.

\begin{prop}\label{prod_fib}
Whenever $f: A \rightarrow B$ and $g: C \rightarrow D$ are fibrations, so is the map $fp_A \times
gp_C: A \times C \rightarrow B \times D$.
\end{prop}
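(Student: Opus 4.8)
The plan is to factor $fp_A \times gp_C$ through the intermediate product $B \times C$ and to exhibit each of the two resulting stages as a pullback of a fibration. Explicitly, I would write it as the composite
\[
A \times C \xrightarrow{\ fp_A \times p_C\ } B \times C \xrightarrow{\ p_B \times gp_C\ } B \times D .
\]
Since fibrations are closed under composition, it then suffices to prove that each of these two factors is a fibration.

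For the first factor, I would verify that the square with top edge $fp_A \times p_C$, bottom edge $f: A \to B$, and vertical edges the projections $p_A: A \times C \to A$ and $p_B: B \times C \to B$ is a pullback square. This is a routine check with universal properties: the limit of the cospan $A \xrightarrow{f} B \xleftarrow{p_B} B \times C$ is canonically $A \times C$ equipped with its two projections, and under this identification the comparison map to $B \times C$ is exactly $fp_A \times p_C$. Since fibrations in any model category are stable under base change (being the maps with the right lifting property against the acyclic cofibrations, a class of maps closed under pullback), it follows that $fp_A \times p_C$ is a fibration. The second factor is dealt with in the same way: $p_B \times gp_C$ is the pullback of the fibration $g: C \to D$ along the projection $p_D: B \times D \to D$, and so it too is a fibration. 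Composing the two factors yields the proposition.

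There is no genuine obstacle here; the only step that calls for any care is the verification that the displayed square really is a pullback, which is immediate from the universal property of the product. It is worth noting that this argument invokes none of the supplementary hypotheses collected in Convention~\ref{conv}.
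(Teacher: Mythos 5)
Your proof is correct, but it takes a genuinely different route from the paper's. The paper's argument is very short: it first observes (using the convention that projections are fibrations) that $fp_A$ and $gp_C$ are fibrations, and then appeals to the fact that products of fibrations are fibrations. Your argument instead decomposes $fp_A \times gp_C$ as the composite
\[
A \times C \xrightarrow{\ f\times 1_C\ } B \times C \xrightarrow{\ 1_B \times g\ } B \times D
\]
and recognizes each stage as a base change --- $f\times 1_C$ is the pullback of $f$ along $p_B$, and $1_B\times g$ is the pullback of $g$ along $p_D$ --- invoking only the stability of fibrations under pullback and composition, both of which hold in any model category. Your closing remark that this argument uses none of the supplementary hypotheses of Convention~\ref{conv} is a nice observation and is accurate: the paper's proof, as written, does lean on the convention that projections are fibrations, whereas yours does not. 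Both proofs are elementary; yours makes the logical dependencies more transparent, while the paper's is more compact.
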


\begin{proof}
Since $p_A$ and $p_C$ are fibrations, so are $fp_A$ and $gp_C$.  The result follows since products of fibrations are
fibrations.
\end{proof}

\begin{prop}\label{prod_weq}
Let $\mcC$ be a model category in which the natural map $X \vee Y \rightarrow X \times Y$ is a weak equivalence
for all cofibrant objects $X$ and $Y$.  Suppose that $A$ and $C$ are cofibrant objects.  Then maps $f: A \rightarrow B$ and $g: C \rightarrow D$ are weak equivalences if
and only if $fp_A \times gp_C: A \times C \rightarrow B \times D$ is a weak equivalence.
\end{prop}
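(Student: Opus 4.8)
The plan is to prove the two implications separately; one is essentially formal while the other carries the real content. For the reverse implication --- that $f$ and $g$ are weak equivalences whenever $fp_A\times gp_C$ is --- I would argue that $f$ is a \emph{retract} of $fp_A\times gp_C$ in the arrow category of $\mcC$ and then invoke closure of the weak equivalences under retracts. Concretely, let $s_A = \langle \mathrm{id}_A, 0\rangle\colon A\to A\times C$ and $s_B = \langle\mathrm{id}_B,0\rangle\colon B\to B\times D$; a direct computation with the universal property of the product gives $(fp_A\times gp_C)\circ s_A = \langle f,0\rangle = s_B\circ f$ and $p_B\circ(fp_A\times gp_C) = f\circ p_A$, while $p_A s_A = \mathrm{id}_A$ and $p_B s_B = \mathrm{id}_B$. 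Stacking these into the evident two-square diagram exhibits $f$ as a retract of $fp_A\times gp_C$, so $f$ is a weak equivalence; the symmetric argument with the other section handles $g$. Note this direction uses neither the cofibrancy of $A$ and $C$ nor the semi-stability hypothesis.

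For the forward implication I would bring in the functorial factorizations. Assuming $f$ and $g$ are weak equivalences, factor $f = f''\circ f'$ with $f'\colon A\to A'$ an acyclic cofibration and $f''\colon A'\to B$ a fibration; $2$-out-of-$3$ then makes $f''$ an acyclic fibration, and similarly write $g = g''\circ g'$. Since $A$ and $C$ are cofibrant and cofibrations compose, $A'$ and $C'$ are cofibrant as well. A short check with projections shows $fp_A\times gp_C = (f''p_{A'}\times g''p_{C'})\circ(f'p_A\times g'p_C)$, so by $2$-out-of-$3$ it suffices to see that each factor is a weak equivalence. The outer factor $f''p_{A'}\times g''p_{C'}\colon A'\times C'\to B\times D$ is a weak equivalence by the third clause of Convention~\ref{conv}, since $f''$ and $g''$ are acyclic fibrations.

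For the remaining factor $f'p_A\times g'p_C\colon A\times C\to A'\times C'$ I would use the semi-stability hypothesis through the naturality square of the canonical transformation $(-)\vee(-)\Rightarrow(-)\times(-)$: its top edge $i_{A'}f'\vee i_{C'}g'\colon A\vee C\to A'\vee C'$ is a weak equivalence by the fourth clause of Convention~\ref{conv} (as $f'$ and $g'$ are acyclic cofibrations), both vertical edges $A\vee C\to A\times C$ and $A'\vee C'\to A'\times C'$ are weak equivalences by hypothesis since $A,C,A',C'$ are all cofibrant, and so $2$-out-of-$3$ forces the bottom edge $f'p_A\times g'p_C$ to be a weak equivalence. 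This finishes the forward implication.

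The single point that needs care --- and the reason the forward direction cannot be done by one naturality square applied directly to $f$ and $g$ --- is that $B$ and $D$ are not assumed cofibrant, so $B\vee D\to B\times D$ need not be a weak equivalence and coproducts of arbitrary weak equivalences need not be weak equivalences. Passing $f$ and $g$ through their acyclic-cofibration-then-acyclic-fibration factorizations is precisely what localizes the coproduct argument to cofibrant objects; everything else is routine manipulation of products.
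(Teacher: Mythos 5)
Your proof is correct and follows essentially the same route as the paper's: the reverse implication via the retract axiom, and the forward implication by factoring $f$ and $g$ through a cofibrant intermediate object to reduce to the two clauses of Convention~\ref{conv} together with the hypothesis on the map $X\vee Y\to X\times Y$. The only cosmetic differences are that you chose the (acyclic cofibration, fibration) factorization where the paper uses (cofibration, acyclic fibration) --- both land on the same acyclic-cofibration-then-acyclic-fibration picture after $2$-out-of-$3$ --- and that you split the argument into a product decomposition followed by a naturality square where the paper runs it as a single pentagon.
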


\begin{proof}
The ``if" direction ($\Leftarrow$) follows directly from the retract axiom in a general model category.  For the other
implication, suppose that $f$ and $g$ are weak equivalences.  We may factor $f$ and $g$ as cofibrations
followed by acyclic fibrations, say
\[
A \stackrel{f'}{\hookrightarrow} B' \stackrel{\sim}{\twoheadrightarrow} B
\]
and
\[
C \stackrel{g'}{\hookrightarrow} D' \stackrel{\sim}{\twoheadrightarrow} D
\]
respectively.  Note that $f'$ and $g'$ are necessarily weak equivalences, and that $B'$ and $D'$ are cofibrant.

We now have a commutative diagram
\[
\begindc{0}[3]
\obj(-12,8)[1]{$A \vee C$} \obj(12,8)[2]{$B' \vee D'$}
\obj(-12,-8)[3]{$A \times C$} \obj(12,-8)[4]{$B' \times D'$}
\obj(0,-20)[5]{$B \times D$} \mor{1}{2}{$(1)$}
\mor{1}{3}{$(2)$}[\atright, \solidarrow] \mor{2}{4}{$(3)$}
\mor{3}{5}{$(4)$}[\atright, \solidarrow] \mor{4}{5}{$(5)$}
\enddc
\]
in which map (4) is the map in question.  Maps (1) and (5) are weak equivalences by our conventions, while maps (2)
and (3) are weak equivalences by hypothesis.  Hence map (4) is a weak equivalence, as desired.
\end{proof}

We close this section with a few remarks about cofibrant generation and categories of functors; see \cite[Chapter 11]{PH} for complete
details.  When $\mcC$ is a cofibrantly generated model category,
the category $[\mcD^\op, \mcC]$ of functors $\mcD^\op \rightarrow \mcC$ inherits a model structure (here $\mcD$ can be any
small category).  This model structure is also cofibrantly generated, and weak equivalences and fibrations of
diagrams are defined objectwise.  We will need a complete description of the cofibrations.

\begin{defn}
Fix an object $d$ of $\mcD$ and an object $C$ of $\mcC$.  The \textit{free functor} generated by $d$ and $C$ is the
functor $F_d^C: \mcD^\op \rightarrow \mcC$ given by
\[
F_d^C(x) = C \otimes \mcD(x,d)_+.
\]
\end{defn}

Clearly every map $i: B \rightarrow C$ in $\mcC$ induces a natural transformation $i_*: F_d^B \rightarrow F_d^C$ of free
functors.  When $\mcC$ is cofibrantly generated, the cofibrations in $[\mcD^\op, \mcC]$ are generated by the maps of the
form $i_*: F_d^B \rightarrow F_d^C$, where $i$ is a generating cofibration of $\mcC$.  Thus all cofibrations in the
category of diagrams are obtained through transfinite compositions of pushouts of such maps (and retracts thereof).

In order to handle these transfinite compositions, we require one more technical tool before we proceed, namely a reasonable
sequential homotopy colimit functor.  Let $\mcC$ be a model category and fix an ordinal $\lambda$.  By a \textit{$\lambda$-sequence} in $\mcC$
we mean a functor $X: \lambda \rightarrow \mcC$, where $\lambda$ is made into a category in the usual way.  Since $\mcC$ is
cofibrantly generated, the category $[\lambda, \mcC]$ of all such sequences inherits a model structure with objectwise
weak equivalences and fibrations.  This is even true if $\mcC$ is not cofibrantly generated, though we will not need this
fact.  In this model structure, a $\lambda$-sequence is cofibrant if it is objectwise cofibrant and each map in the
diagram is a cofibration; see \cite[Example 4.3]{CS}.

We will use the letter $Q$ to denote cofibrant replacement.  According to Hirschhorn in
\cite[Proposition 17.9.1]{PH}, the map $\colim(g): \colim(X) \rightarrow \colim(Y)$ is a weak equivalence whenever
$g: X \rightarrow Y$ is a weak equivalence of cofibrant $\lambda$-sequences.  This of course implies that the natural map
$\colim(QX) \rightarrow \colim(X)$ is a weak equivalence whenever $X$ is a cofibrant $\lambda$-sequence.

One would like to define the homotopy colimit of a $\lambda$-sequence $X$ as the colimit of its cofibrant replacement.  This
turns out to be fine for objects, but some care must be taken with the morphisms.  We remark that cofibrant replacements of
objects are well-defined since the functorial factorizations are fixed as part of the model structure.  However, cofibrant replacements
of \textit{maps} are unique only up to (left) homotopy.  By passing to the homotopy category $\ho(\mcC)$, this problem
is remedied.

\begin{prop}\label{hocolim}
Forming colimits of cofibrant replacements yields a well-defined functor $\hocolim: [\lambda, \mcC] \rightarrow \ho(\mcC)$.  If $X$ is a cofibrant $\lambda$-sequence,
the natural map $\hocolim(X) \rightarrow \colim(X)$ is an isomorphism in the homotopy category.
\end{prop}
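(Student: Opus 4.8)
The plan is to address the two assertions separately, dealing first with well-definedness of the functor and then with the comparison map. For well-definedness, the issue is entirely on morphisms, since cofibrant replacement of objects is strictly functorial (the functorial factorizations being fixed as part of the model structure). Given a $\lambda$-sequence $X$, I would set $\hocolim(X) = \colim(QX)$, where $QX$ is the chosen cofibrant replacement in $[\lambda, \mcC]$; this is an honest object of $\mcC$, which I then view in $\ho(\mcC)$. Given a map $g: X \to Y$ of $\lambda$-sequences, functoriality of the factorization produces a map $Qg: QX \to QY$ lifting $g$, and I set $\hocolim(g) = \colim(Qg)$, viewed in $\ho(\mcC)$. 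Since $Q$ is a genuine functor on $[\lambda, \mcC]$ here, this assignment is strictly functorial even before passing to $\ho(\mcC)$; the only reason to land in $\ho(\mcC)$ rather than $\mcC$ is to make the statement robust and to match the comparison assertion. Actually, with the functorial factorization in place I expect $\hocolim$ to be well-defined as a functor into $\mcC$ itself, so the passage to $\ho(\mcC)$ is a convenience rather than a necessity; I would phrase the argument so that the homotopy-category target is what is claimed, matching the remark preceding the proposition about replacements of maps being unique only up to homotopy in the non-functorial setting.

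For the comparison, suppose $X$ is a cofibrant $\lambda$-sequence. The cofibrant replacement comes with a natural weak equivalence $QX \to X$ in $[\lambda, \mcC]$, and both $QX$ and $X$ are cofibrant objects of $[\lambda, \mcC]$. By Hirschhorn's result \cite[Proposition 17.9.1]{PH} cited in the excerpt, $\colim$ preserves weak equivalences between cofibrant $\lambda$-sequences, so the induced map $\colim(QX) \to \colim(X)$ is a weak equivalence in $\mcC$. Passing to $\ho(\mcC)$, this becomes the natural isomorphism $\hocolim(X) \xrightarrow{\ \sim\ } \colim(X)$, as claimed. The naturality of this comparison over maps of cofibrant $\lambda$-sequences follows from naturality of the cofibrant-replacement transformation $QX \to X$ together with functoriality of $\colim$.

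The main obstacle, such as it is, is simply bookkeeping around the phrase ``well-defined functor into $\ho(\mcC)$'': one must be careful to state exactly which functoriality is being claimed and to note that it holds on the nose once the factorizations are fixed, so that there is nothing to check beyond invoking functoriality of $Q$ and of $\colim$. I expect no genuine technical difficulty here, only the need to be precise about the distinction drawn in the paragraph preceding the proposition — namely that, absent fixed functorial factorizations, $Qg$ would be defined only up to left homotopy and one would be forced into $\ho(\mcC)$; with them fixed, the construction is rigid, and the comparison statement is an immediate consequence of the Hirschhorn proposition.
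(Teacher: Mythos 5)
Your argument is correct and matches the paper's implicit reasoning in the paragraph preceding the proposition: define $\hocolim$ as the composite $\gamma \circ \colim \circ Q$ (with $\gamma$ the localization to $\ho(\mcC)$) and invoke Hirschhorn's Proposition~17.9.1 for the comparison map. Your side observation is a fair sharpening of the paper's phrasing: since the functorial factorizations are fixed and the inherited model structure on $[\lambda,\mcC]$ is cofibrantly generated, $Q$ is a genuine endofunctor and $\colim \circ Q$ is well-defined already as a functor into $\mcC$; the paper's remark about cofibrant replacements of maps being unique only up to left homotopy is really about the non-functorial setting, so passing to $\ho(\mcC)$ serves robustness and the comparison statement rather than strict well-definedness.
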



\subsection{Semi-stable model categories}\label{stable_subsection}

We can now describe the properties model categories must have in order to develop our Morita equivalences.  It is
helpful to keep in mind that each axiom below is a statement about the equivalence of finite products and coproducts.

\begin{defn}
We say that a model category $\mcC$ is \emph{semi-stable} if the following product-coproduct coherence axioms are satisfied:
\begin{itemize}
\item  \textbf{The Lower Triangular Axiom:}  Suppose that $A_1$ and $A_2$ are cofibrant objects and that the map $f: A_1 \vee A_2 \rightarrow
A_1 \times A_2$ has components $f_{ij}: A_i \rightarrow A_j$.  If the diagonal components $f_{ii}$ are weak equivalences
and $f_{12} = 0$, then $f$ is a weak equivalence.
\item \textbf{Pushout-Product Coherence:}  Given two homotopy pushout diagrams ($i=1,2$)
\[
\begindc{0}[3]
\obj(-9,8)[a]{$X_i$}
\obj(9,8)[b]{$Y_i$}
\obj(-9,-8)[c]{$Z_i$}
\obj(9,-8)[d]{$P_i$}
\mor{a}{b}{} \mor{c}{d}{}
\mor{b}{d}{} \mor{a}{c}{}
\enddc
\]
the product square
\[
\begindc{0}[3]
\obj(-11,8)[a]{$X_1 \times X_2$}
\obj(11,8)[b]{$Y_1 \times Y_2$}
\obj(-11,-8)[c]{$Z_1 \times Z_2$}
\obj(11,-8)[d]{$P_1 \times P_2$}
\mor{a}{b}{} \mor{c}{d}{}
\mor{b}{d}{} \mor{a}{c}{}
\enddc
\]
is also a homotopy pushout.
\item \textbf{Colimit-Product Coherence:}  Whenever $X$ and $Y$ are cofibrant $\lambda$-sequences in $\mcC$, the natural map
\[
\hocolim(X \times Y) \longrightarrow \hocolim(X) \times \hocolim(Y)
\]
is an isomorphism in the homotopy category.
\end{itemize}
\end{defn}

Note that these axioms are definitely one-sided and not self-dual.  This is intentional.  We will see that the right
adjoint in our Quillen equivalence always admits a product decomposition, but in general the left adjoint does not
admit a corresponding coproduct formula.

\begin{rem}
By applying the Lower Triangular Axiom to the ``identity matrix," it is immediate that Proposition~\ref{prod_weq} holds
in any semi-stable model category.
\end{rem}

\begin{prop}
Every complete and cocomplete abelian category admits the structure of a semi-stable model category.
\end{prop}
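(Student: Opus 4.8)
The plan is to give $\mcC$ the \emph{trivial} model structure, in which the weak equivalences are exactly the isomorphisms and every morphism is simultaneously a cofibration and a fibration. Since $\mcC$ is complete and cocomplete the limit--colimit axiom holds; two-out-of-three and the retract axiom are immediate for the class of isomorphisms; and since every acyclic cofibration and every acyclic fibration is an isomorphism, all the required lifting problems have solutions. For the functorial factorizations one factors $f\colon X \to Y$ as $X \xrightarrow{\,f\,} Y \xrightarrow{\,1_Y\,} Y$ and as $X \xrightarrow{\,1_X\,} X \xrightarrow{\,f\,} Y$; both are visibly functorial, and checking that together with the lifting properties they assemble the two required weak factorization systems is routine.

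I would then verify the blanket hypotheses of Convention~\ref{conv}. An abelian category is pointed, with the zero object as basepoint; properness is automatic, a pushout or pullback of an isomorphism being an isomorphism; and every morphism is a fibration, so in particular every projection is one. If $f$ and $g$ are acyclic fibrations, hence isomorphisms, then under the canonical identification of finite products with finite biproducts the map $fp_A \times gp_C$ is simply $f \oplus g$, again an isomorphism and so a weak equivalence; the acyclic-cofibration statement follows by duality. The one hypothesis that does real work is cofibrant generation. Here one takes $J = \emptyset$, so that the fibrations are precisely the $\emptyset$-injective maps (all maps), and one must exhibit a \emph{set} $I$ of maps with small domains whose right lifting property cuts out exactly the isomorphisms. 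When $\mcC$ has a small projective generator $G$ --- as for $R$-modules, with $G = R$ --- the two-element set $I = \{0 \to G,\ G \to 0\}$ suffices, since lifting against $0 \to G$ forces an epimorphism and lifting against $G \to 0$ forces a monomorphism. I expect this to be the main obstacle, and the point at which one may need to assume local presentability, or sharpen the hypotheses, to reach full generality.

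Finally I would check the three semi-stability axioms, each of which collapses once one recalls that in an abelian category the natural map $X \vee Y \to X \times Y$ is an isomorphism and that colimits commute with one another. For the Lower Triangular Axiom, a self-map of $A_1 \oplus A_2$ whose matrix is triangular with its diagonal entries isomorphisms is itself invertible, hence a weak equivalence. For Pushout--Product Coherence, every object is cofibrant and every morphism a cofibration, so homotopy pushouts are ordinary pushouts, and the product of two pushout squares is coordinatewise a coproduct of pushout squares, which is again a pushout because coproducts commute with pushouts. For Colimit--Product Coherence, every $\lambda$-sequence is cofibrant, so Proposition~\ref{hocolim} identifies the homotopy colimit with the ordinary colimit (the homotopy category here being $\mcC$ itself), and $\colim(X \times Y) \to \colim(X) \times \colim(Y)$ is an isomorphism because $X \times Y$ is the coproduct of $X$ and $Y$ in $[\lambda,\mcC]$ and $\colim$ preserves coproducts.
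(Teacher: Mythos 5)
Your proposal follows the same route as the paper: endow $\mcC$ with the trivial model structure (isomorphisms as weak equivalences, every map a fibration and a cofibration), and then observe that the homotopy-theoretic conditions collapse because finite products and finite coproducts coincide in an abelian category. Your verifications of Pushout--Product and Colimit--Product Coherence are the same as the paper's in substance, and your argument for the Lower Triangular Axiom (a triangular matrix over an abelian category with invertible diagonal entries is invertible) is equivalent to the paper's, which simply writes out the inverse $\begin{pmatrix} a^{-1} & 0 \\ -d^{-1}ca^{-1} & d^{-1} \end{pmatrix}$ explicitly.

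The one place you go beyond the paper is in flagging cofibrant generation. The paper dismisses this with ``it is immediate that all parts of Convention~\ref{conv} are satisfied,'' which suppresses exactly the point you raise: Convention~\ref{conv} requires $\mcC$ to be cofibrantly generated, and for the trivial model structure this means exhibiting a \emph{set} $I$ with small domains whose injectives are precisely the isomorphisms, which is not automatic for an arbitrary complete and cocomplete abelian category. Your candidate $I = \{0\to G,\ G\to 0\}$ for a small projective generator $G$ is correct and covers the intended examples such as $R$-modules, but as you note, the Proposition as literally stated would need either that extra hypothesis or something like a Grothendieck/local-presentability assumption to hold in full generality. You have therefore identified a real, if minor and readily repaired, imprecision in the published proof rather than a gap in your own.
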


\begin{proof}
Suppose that $\mcC$ is abelian with all limits and colimits.  It is well-known that declaring the weak equivalences to be the isomorphisms gives a model
structure on $\mcC$ (and here, every map is both a fibration and a cofibration).  In this case, all ``homotopy adjectives"
become vacuous.  It is immediate that all parts of Convention~\ref{conv} are satisfied.  Moreover, Pushout-Product and
Colimit-Product Coherence follow from the fact that the natural map $X \oplus Y \rightarrow X \times Y$ is always an isomorphism.

For the Lower Triangular Axiom, suppose that $f=\begin{pmatrix} a & 0 \\ c & d \end{pmatrix}$ is such a map with $a$ and
$d$ isomorphisms.  We can write down the inverse of $f$ explicitly as
\[
f^{-1} = \begin{pmatrix} a^{-1} & 0 \\ -d^{-1}ca^{-1} & d^{-1} \end{pmatrix}
\]
as one can check.  Hence the Lower Triangular Axiom holds and $\mcC$ is semi-stable.
\end{proof}

Many stable model categories also satisfy the axioms for a semi-stability.  Recall that a pointed model category is
\textit{stable} if its homotopy category is triangulated under its natural loop and suspension.
For a definition that avoids reference to the homotopy category, Mark Hovey points out in \cite[Chapter 7]{MH} that this
is equivalent to the coincidence of homotopy pullback and pushout squares in the underlying model category.  Hence
Pushout-Product Coherence is to be expected as products and pullbacks interact nicely.  The third axiom will
follow whenever products of weak equivalences are weak equivalences and so are the maps $X \vee Y \rightarrow X \times Y$
for $X$ and $Y$ are cofibrant.

\begin{prop}\label{lower_triang_prop}
The Lower Triangular Axiom holds in any stable model category in which all objects are fibrant.
\end{prop}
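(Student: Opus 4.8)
The goal is to show that in a stable model category $\mcC$ in which every object is fibrant, a map $f\colon A_1 \vee A_2 \to A_1 \times A_2$ with components $f_{ij}$ satisfying $f_{ii}$ a weak equivalence and $f_{12}=0$ is itself a weak equivalence (for $A_1,A_2$ cofibrant). The natural strategy is to reduce to the known behavior of stable categories: products and pullbacks coincide, and the canonical map $X \vee Y \to X \times Y$ is a weak equivalence for cofibrant $X,Y$ (this last fact is the stable analogue of the abelian $X \oplus Y \cong X \times Y$ isomorphism; it follows from the triangulated structure on $\ho(\mcC)$, where the natural map from coproduct to product of two objects is an isomorphism). Given this, Proposition~\ref{prod_weq}'s hypothesis is met, so products of weak equivalences of cofibrant objects are weak equivalences.

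First I would dispose of the case where $f$ is the ``identity matrix'' $\begin{pmatrix} \mathrm{id} & 0 \\ 0 & \mathrm{id}\end{pmatrix}$: this is exactly the canonical map $A_1 \vee A_2 \to A_1 \times A_2$, a weak equivalence by the stability remark above. For the general lower-triangular $f$, the idea is to factor it through the identity matrix. Consider the automorphism of $A_1 \times A_2$ given (on the homotopy category, or as an actual map if we are careful) by the matrix $\begin{pmatrix} f_{11} & 0 \\ f_{21} & f_{22}\end{pmatrix}$ acting on $A_1 \times A_2$, and similarly a self-map of $A_1 \vee A_2$. The cleanest route is to work in $\ho(\mcC)$, where $[A_1 \vee A_2, A_1 \times A_2]$ is an abelian group (additivity of a triangulated category), and where $A_1 \vee A_2 \to A_1 \times A_2$ is an isomorphism. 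Under this identification $f$ corresponds to an endomorphism of $A_1 \times A_2$ in $\ho(\mcC)$ represented by the lower-triangular matrix $\begin{pmatrix} f_{11} & 0 \\ f_{21} & f_{22}\end{pmatrix}$ with $f_{11}, f_{22}$ invertible (being weak equivalences between cofibrant--hence also fibrant--objects). Exactly as in the abelian proof given earlier in the paper, this matrix has an explicit inverse $\begin{pmatrix} f_{11}^{-1} & 0 \\ -f_{22}^{-1}f_{21}f_{11}^{-1} & f_{22}^{-1}\end{pmatrix}$, computed using the additive structure of the hom-groups and the biproduct decomposition of maps into and out of $A_1 \times A_2$. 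Hence $f$ is an isomorphism in $\ho(\mcC)$, i.e. a weak equivalence.

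The step I expect to require the most care is the passage between the honest model-category map $f$ and its matrix description in the additive homotopy category: one must justify that (i) $A_1 \vee A_2 \to A_1 \times A_2$ is a weak equivalence in this setting, citing Hovey's characterization of stability and the resulting biproducts in $\ho(\mcC)$, and (ii) the component maps $f_{ij}$ of $f$ correspond correctly to the matrix entries after this identification, so that $f$ itself becomes the lower-triangular matrix. The fibrancy-of-all-objects hypothesis enters to ensure the projections are fibrations (Convention~\ref{conv}) and, more importantly, that weak equivalences between the cofibrant objects $A_i$ descend to isomorphisms in $\ho(\mcC)$ without needing further fibrant replacement. Once the translation to $\ho(\mcC)$ is in place, the matrix inversion is a formal computation identical to the abelian case already recorded in the proof of the preceding proposition, and invertibility in $\ho(\mcC)$ is precisely the statement that $f$ is a weak equivalence.
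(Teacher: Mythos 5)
Your proof is correct but takes a genuinely different route from the paper's. The paper does not invoke the additive structure of $\ho(\mcC)$ or lift the matrix-inversion formula; instead it writes down the commutative ladder with rows
\[
A_1 \stackrel{i_1}{\longrightarrow} A_1 \vee A_2 \stackrel{0 \vee 1}{\longrightarrow} A_2
\qquad\text{and}\qquad
A_1 \stackrel{1 \times 0}{\longrightarrow} A_1 \times A_2 \stackrel{p_2}{\longrightarrow} A_2,
\]
observes that its commutativity is exactly the hypothesis $f_{12}=0$, and notes that the top row is a (co)fiber sequence by cofibrancy of the $A_i$ while the bottom row is a fiber sequence because all objects are fibrant. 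Stability enters only to identify fiber and cofiber sequences in $\ho(\mcC)$, after which $f$ is the middle arrow in a map of distinguished triangles whose outer arrows are isomorphisms, hence itself an isomorphism. Your argument instead establishes $A_1 \vee A_2 \simeq A_1 \times A_2$ as a biproduct in the triangulated $\ho(\mcC)$ and then inverts the lower-triangular matrix in the additive hom-groups, exactly mirroring the abelian proof recorded just before this proposition. Both are sound; yours makes the parallel with the abelian case explicit and replaces the five-lemma-for-triangles step with an explicit inverse, while the paper's keeps the fibrancy hypothesis visibly load-bearing (it is what makes the bottom row a fiber sequence on the nose). One small correction: your claim that fibrancy is needed so that weak equivalences between cofibrant objects descend to isomorphisms in $\ho(\mcC)$ is off — weak equivalences always become isomorphisms in the homotopy category. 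The actual role of fibrancy here is to guarantee that $A_1 \times A_2$ already computes the homotopy product, so the biproduct identification in $\ho(\mcC)$ is carried by the underlying model-category product rather than a fibrant replacement.
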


\begin{proof}
Take a map $f = \begin{pmatrix} f_{11} & 0 \\ f_{21} & f_{22} \end{pmatrix}$ as in the statement of the axiom.  First one
notes that the diagram
\[
\begindc{0}[3]
\obj(-22,9)[1]{$A_1$}
\obj(0,9)[2]{$A_1 \vee A_2$}
\obj(22,9)[3]{$A_2$}
\obj(-22,-9)[4]{$A_1$}
\obj(0,-9)[5]{$A_1\times A_2$}
\obj(22,-9)[6]{$A_2$}
\mor{1}{2}{$i_1$}
\mor{2}{3}{$0 \vee 1$}
\mor{4}{5}{$1 \times 0$}
\mor{5}{6}{$p_2$}
\mor{1}{4}{$f_{11}$}[\atright, \solidarrow]
\mor{2}{5}{$f$}
\mor{3}{6}{$f_{22}$}
\enddc
\]
commutes precisely because $f_{12} = 0$.  As $A_1$ and $A_2$ are cofibrant, the top row is a homotopy cofiber sequence
in the homotopy category.  Likewise, since all objects are fibrant, the bottom row is a homotopy fiber sequence.
Of course in the homotopy category, fiber and cofiber sequences coincide.  Since $f_{11}$ and $f_{22}$ are weak
equivalences, we now have a map of fiber sequences that is an isomorphism on the fiber and the base.  Thus $f$ represents
an isomorphism in the homotopy category, and is therefore a weak equivalence.
\end{proof}

\begin{example}
The simplest example of such a stable model category is the category of chain complexes of $R$-modules.  Here the
weak equivalences are the homology isomorphisms and the fibrations are the surjections.  Of course this category
is also abelian and thus may be regarded as semi-stable with the isomorphisms as the weak equivalences.  Our main theorem
says something very different in each of these cases.
\end{example}


\section{Indexing Categories}\label{index_cat_sect}

Categories with a certain kind of rigid structure will be instrumental throughout the development of our main result.
The tight structural properties of these categories will give us control over the indexing of crucial (co)product decompositions.
In essence, these \emph{indexing categories} serve to parameterize images and cokernels.

For this, we will need to recall some basic facts about EI-categories.  First, in an EI-category all retracts are isomorphisms.
In any category with pullbacks, the condition that all retracts are isomorphisms is equivalent to every map being monic.
Hence all maps are monic in an EI-category with pullbacks.  An additional finiteness assumption is all that we require in
the following.

\begin{defn}
A small EI-category $\mcI$ with pullbacks is an \textit{indexing category} if each comma category $\mcI \downarrow a$
has a skeleton with a finite number of objects.
\end{defn}

The last condition simply asserts that for each object $a$ there are only finitely many maps to $a$ up
to ``covering" equivalence.  Moreover, such covering equivalences are unique since all maps in an indexing
category are monic.  We will denote a skeleton of $\mcI \downarrow a$ by $\sk (\mcI \downarrow a)$.  One important
structural aspect of indexing categories is given in the next proposition.

\begin{prop}\label{poset}
Let $\mcI$ be an indexing category.  For each object $a$, the category $\sk (\mcI \downarrow a)$ has the structure of a
finite partially ordered set.
\end{prop}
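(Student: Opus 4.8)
The plan is to show that $\sk(\mcI \downarrow a)$ is a finite category in which (i) there is at most one morphism between any two objects, and (ii) the only endomorphisms are identities, since a poset is precisely a small category with these two properties (antisymmetry then follows formally). Finiteness is immediate from the definition of an indexing category. For the rest, the key observation is that every morphism in $\mcI$ is monic: $\mcI$ is an EI-category with pullbacks, and as noted in the text this forces all retracts to be isomorphisms, hence all maps to be monic.

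First I would set up notation: an object of $\sk(\mcI \downarrow a)$ is a morphism $f\colon x \to a$ in $\mcI$, and a morphism from $f\colon x \to a$ to $g\colon y \to a$ is a morphism $h\colon x \to y$ in $\mcI$ with $gh = f$. For property (ii), suppose $h\colon x \to x$ satisfies $fh = f$ for $f\colon x \to a$ an object of the skeleton. Since $f$ is monic, $h = 1_x$; so every endomorphism in $\sk(\mcI\downarrow a)$ is the identity. For property (i), suppose $h, h'\colon x \to y$ both satisfy $gh = f = gh'$. Since $g$ is monic, $h = h'$. Thus there is at most one morphism between any two objects of $\sk(\mcI \downarrow a)$.

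From (i) and (ii) the poset structure is automatic: define $f \leq g$ iff there exists a (necessarily unique) morphism $f \to g$ in $\sk(\mcI \downarrow a)$. Reflexivity is the identity, transitivity is composition, and for antisymmetry, if $f \leq g$ and $g \leq f$ then composing the two morphisms gives an endomorphism of $f$, which by (ii) is the identity; hence the two morphisms are mutually inverse isomorphisms in $\mcI$, so $f$ and $g$ are isomorphic objects of $\mcI \downarrow a$. Since we are working in a skeleton, isomorphic objects are equal, giving $f = g$.

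I do not anticipate a serious obstacle here; the statement is essentially a formal consequence of ``all maps monic,'' which is handed to us by the EI-plus-pullbacks hypothesis. The one point requiring a modicum of care is the antisymmetry step, where one must remember that passing to a skeleton is exactly what turns ``isomorphic'' into ``equal''—without that the conclusion would only be a preorder. It is also worth remarking that the finiteness hypothesis on $\sk(\mcI \downarrow a)$ plays no role in establishing the partial-order structure itself; it is only there to make the word ``finite'' in the conclusion true.
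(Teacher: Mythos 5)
Your proof is correct and follows essentially the same route as the paper: define $i \leq j$ by the existence of a factoring map in $\mcI \downarrow a$, then use the fact that every map in an EI-category with pullbacks is monic to verify the poset axioms (the paper simply declares this verification routine). One small caveat: your opening claim that properties (i) and (ii) alone characterize posets is slightly off --- a category with at most one morphism between any two objects and only identity endomorphisms can still have two distinct, mutually isomorphic objects, so antisymmetry genuinely requires the skeleton condition --- but your proof body recognizes this and invokes the skeleton at exactly the right moment, so the argument as carried out is sound.
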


\begin{proof}
This is even true before taking skeleta (except for the finiteness).  The ordering $\leq$ is defined as follows:  for maps $i: x \rightarrow a$
and $j: y \rightarrow a$, we declare $i \leq j$ if there is a commutative triangle
\[
\begindc{0}[3]
\obj(-10,8)[x]{$x$}
\obj(10,8)[y]{$y$}
\obj(0,-8)[a]{$a$}
\mor{x}{y}{$k$}
\mor{x}{a}{$i$}[\atright, \solidarrow]
\mor{y}{a}{$j$}
\obj(3,-9){.}
\enddc
\]
Checking that this gives a partial ordering is routine.
\end{proof}


\begin{example}\label{poset_ex1}
Let $P$ be a partially ordered set with unique greatest lower bounds.  Suppose that the segment $\{x \in P \ | \ x \leq a\}$
is finite for each $a \in P$.  Then the category formed from the poset $P$ (in the usual way) is an indexing category.  Hence
any finite tree forms an indexing category, as does the subgroup lattice of a finite group.
\end{example}

\begin{example}\label{N_subsets}
Given a subset $A$ of the natural numbers (without 0), let $A_+$ denote $A \cup \{0\}$, where $0$ will always play the
role of the basepoint.  Let $\mcI$ denote the category with objects the finite sets $A_+$ and morphisms the based injective
functions.  As pullbacks here are given by intersections, it is easy to see that $\mcI$ is an indexing category.
\end{example}

\begin{example}\label{Aut_G}
(After example 11.2 of \cite{tD}.)  Fix a finite group $G$ and a homogeneous $G$-set $G/H$.  Denote by $\Gamma(G/H)$ the category with $G/H$ as its only
object and equivariant $G$-maps as the morphisms.  It is well known that all equivariant maps $G/H \rightarrow G/H$ are
automorphisms, so $\Gamma(G/H)$ is an EI-category with pullbacks.  The finiteness condition is obviously met, so $\Gamma(G/H)$
is an indexing category.  Functors from this category into the category of $R$-modules give left $R\mathrm{Aut}(G/H)$-modules.
\end{example}

\begin{example}\label{mono_G}
Let $G$ be a group and let $\mcM_G$ be the category of finite $G$-sets and equivariant monomorphisms.  Pullbacks correspond
to intersections and the finiteness condition on the skeleta is clearly satisfied.  Thus $\mcM_G$ is an indexing category.
\end{example}

\begin{example}\label{ordered_maps}
Call a $\Gamma$-map $i: \mplus \rightarrow \nplus$ \textit{ordered} if $i(x) < i(y)$ whenever $x < y$.  Letting $\mcO$ denote
the subcategory of $\Gamma$ consisting of the ordered maps, we see that $\mcO$ is an indexing category.  It is clear that
$\mcO(\mplus, \nplus)$ is in one-to-one correspondence with the subsets of $\bfn = \{1, 2, \ldots, n\}$ of order $m$.
Under this correspondence, the pullback of two maps in $\mcO$ corresponds to the intersection of the subsets they represent.
Moreover, we have $\sk (\mcO \downarrow \nplus) = \mcO \downarrow \nplus$ since $\mcO$ has no non-identity isomorphisms.
\end{example}

This last example has some additional structure that should be emphasized.
Every ordered map $i: \mplus \rightarrow \nplus$ has a natural dual $i^*: \nplus \rightarrow \mplus$ that collapses the
complement of the image of $i$ to the basepoint and satisfies $i^* \circ i = 1$.  These two properties in fact characterize the ``collapse"
map $i^*$.  All such collapse maps give a subcategory of $\Gamma$.

It is obvious that $(k \circ i)^* = i^* \circ k^*$, so that the category $\mcO^*$ of collapse maps is isomorphic to
$\mcO^\op$.  Moreover, a commutative square
\[
\begindc{0}[3]
\obj(-9,8)[a]{$\mplus$} \obj(9,8)[b]{$\nplus$} \obj(-9,-8)[c]{$\rplus$} \obj(9,-8)[d]{$\splus$} \mor{a}{b}{$i$} \mor{c}{d}{$l$}
\mor{b}{d}{$k$} \mor{a}{c}{$j$}[\atright, \solidarrow]
\enddc
\]
in $\mcO$ is a pullback if and only if $j \circ i^* = l^* \circ k$ (recall that pullbacks are intersections here).  This ``interchange
law" implies
that all composites of the form $j \circ i^*$ ($i, j \in \mcO$) yield a subcategory of $\Gamma$:  in order to compose
two such maps, one must use the appropriate pullback to swap the two middle terms.  Note that this new parent category contains
both $\mcO$ and $\mcO^*$ as subcategories.  In the next section we will see that every indexing category allows
for a construction of this sort; see Example~\ref{iso_ex} below.


\section{Conjugate Pairs of Small Categories}\label{conj_pair_section}

\subsection{Categories admitting conjugation}

Let $\mcU$ be a category with subcategories $\mcP$ and $\mcQ$, all three having the same objects.  We say that
$\mcU$ \textit{factors as} $\mcQ \circ \mcP$ if every morphism of $\mcU$ is expressible
as a composition $q \circ p$ for some maps $q \in \mcQ$ and $p \in \mcP$.  In this case we shall write $\mcU = \mcQ
\circ \mcP$.  For us, such factorizations will not necessarily be unique on the nose, but only up to the correct notion of equivalence
of maps in an indexing category (see the first axiom below).  We will write $\Iso(\mcC)$ for the class of isomorphisms in a category $\mcC$.

\begin{defn}\label{reg_fact_def}
Suppose that $\mcU$ is a small category which factors as $\mcU = \mcI \circ \mcA$, where $\mcI$ is an indexing
category with $\Iso(\mcI) \subseteq \Iso(\mcA)$.  We say that this factorization \emph{admits conjugation} if the
following two axioms hold:
\begin{itemize}
\item  The factorization $\mcU = \mcI \circ \mcA$ is unique up to lifting isomorphisms in $\mcI$.  Precisely,
for each commutative square
\[
\begindc{0}[3]
\obj(-8,8)[1]{$a$} \obj(8,8)[2]{$b$} \obj(-8,-8)[3]{$c$}
\obj(8,-8)[4]{$d$} \mor{1}{2}{$\alpha$} \mor{3}{4}{$j$}
\mor{1}{3}{$\beta$}[\atright, \solidarrow]
\mor{2}{4}{$i$} 
\mor{3}{2}{$ $}[\atleft, \dasharrow]
\enddc
\]
with $\alpha, \beta \in \mcA$ and $i, j \in \mcI$, the indicated lift exists and is an isomorphism in $\mcI$.
\item  Given $\alpha: a \rightarrow b$ in $\mcA$ and $i: c \rightarrow b$ in $\mcI$, there is a pullback in $\mcU$
of the form
\[
\begindc{0}[3]
\obj(-8,8)[a]{$p$} \obj(8,8)[b]{$a$} \obj(-8,-8)[c]{$c$} \obj(8,-8)[d]{$b$} \mor{a}{b}{$i'$} \mor{c}{d}{$i$}
\mor{b}{d}{$\alpha$} \mor{a}{c}{$\alpha'$}[\atright, \solidarrow]
\enddc
\]
with $\alpha' \in \mcA$ and $i' \in \mcI$.  Moreover, the natural isomorphism relating two such pullback squares is given
by an isomorphism in $\mcI$.
\end{itemize}
\end{defn}

In essence, these axioms serve to generalize the notion of image and inverse image, ensuring each is unique up to the
correct kind of isomorphism.  The motivation for this definition comes from the following example.

\begin{example}\label{unbased_admits_cong}
Let us say that a map $\gamma$ in $\Gamma$ is \textit{regular} if
$\gamma^{-1}(0) = \{0\}$. That is, $\gamma$ is regular if it sends only the basepoint to the basepoint.  Let
$\mcU$ denote the subcategory of regular maps.  By forgetting the basepoints, we see that $\mcU$ is equivalent to the
category of finite (unbased) sets.

Take as our indexing category the category $\mcO$ of ordered maps in $\Gamma$ (see Example~\ref{ordered_maps}).  By adding
disjoint basepoints, we will consider the category $\mcE$ of unbased epimorphisms as a subcategory of $\Gamma$.  Every map
in $\mcU$ clearly factors as an epimorphism followed by the inclusion of the image into the original codomain.  Once this
image subset is uniquely represented by an $\mcO$-map, we see that $\mcU$ factors as $\mcO \circ \mcE$.  It is immediate that the first axiom
holds since such factorizations are unique on the nose (note that $\mcO$ has no non-identity isomorphisms).

For the second axiom, it is instructive to check
that the pullback of an $\mcE$-map $\alpha: \mplus \rightarrow \nplus$ along an $\mcO$-map $i: \kplus \rightarrow \nplus$
is the inverse image under $\alpha$ of the subset of $\nplus$ represented by $i$.  It is then clear that parallel partners
in the pullback square belong to the same subcategory.  Again, $\mcO$ is sparse enough that the uniqueness requirement
is trivially met, so the second axiom holds.  Hence the factorization $\mcU = \mcO \circ \mcE$ admits conjugation.
\end{example}

We will see that whenever a factorization $\mcU = \mcI \circ \mcA$ admits conjugation, one can construct a new
category $\mcB$, obtained from $\mcU$ by attaching formal pre-compositions by maps in $\mcI^\op$.  In this way
$\mcB$ itself factors as $\mcB = \mcI \circ \mcA \circ \mcI^\op$, giving the three-fold factorizations alluded
to in the introduction.  


\subsection{Conjugate pairs}\label{conj_def_ss}

Throughout, suppose that $\mcU = \mcI \circ \mcA$ is a factorization admitting conjugation.  We will describe how
to construct a new category $\mcB$ containing all of the original data as subcategories, with $\mcB$ factoring
as $\mcB = \mcI \circ \mcA \circ \mcI^\op$.  We will refer to the pair of categories $(\mcB, \mcA)$ as a \emph{conjugate
pair}.  This construction is a generalization of the
\emph{induction categories} of \cite{tD}, or the category $\omega(G)$ of \cite{TW}, both well-known to representation theorists.  In fact, if one takes
$\mcA = \Iso(\mcI)$ then our two axioms for conjugation are trivially satisfied, and the construction we give reduces
to the others.

The category $\mcB$ will have the same objects as $\mcU$.   A morphism $\beta: a \rightarrow b$ in $\mcB$ will be represented
by a diagram
\[
a \stackrel{i}{\longleftarrow} a_1 \stackrel{\alpha}{\longrightarrow} b_1 \stackrel{j}{\longrightarrow} b
\]
where $i, j \in \mcI$ and $\alpha \in \mcA$.  Writing $i^*: a \rightarrow a_1$ for the formal opposite of $i$, we
shall write $\beta = j \circ \alpha \circ i^*$.  In order to get an honest category, we will have to identify some of these morphisms.  We will declare the morphism above
to be equivalent to
\[
a \stackrel{k}{\longleftarrow} a_2 \stackrel{\gamma}{\longrightarrow} b_2 \stackrel{l}{\longrightarrow} b
\]
if there exist isomorphisms $\varphi, \psi \in \mcI$ making the entire diagram
\[
\begindc{0}[3]
\obj(-8,8)[1]{$a_1$}
\obj(8,8)[2]{$b_1$}
\obj(-8,-8)[3]{$a_2$}
\obj(8,-8)[4]{$b_2$}
\obj(-22,0)[5]{$a$}
\obj(22,0)[6]{$b$}
\mor{1}{2}{$\alpha$}
\mor{3}{4}{$\gamma$}
\mor{1}{3}{$\varphi$}[\atright, \solidarrow]
\mor{2}{4}{$\psi$}
\mor{1}{5}{$i$}[\atright, \solidarrow]
\mor{2}{6}{$j$}
\mor{3}{5}{$k$}
\mor{4}{6}{$l$}[\atright, \solidarrow]
\enddc
\]
commute.

It is easy to check that this gives an equivalence relation, and one can take the morphisms in $\mcB$ to be
equivalence classes of such diagrams.  Alternatively, we may take the morphisms of $\mcB$ to be all formal
composites of the form $\beta = j \circ \alpha \circ i^*$ as above, with the understanding that such representations are
not unique.  It is convenient to think of $\beta$ as admitting a three-fold
factorization
\[
\begindc{0}[3]
\obj(-8,8)[1]{$a$} \obj(8,8)[2]{$b$} \obj(-8,-8)[3]{$a_1$}
\obj(8,-8)[4]{$b_1$} \mor{1}{2}{$\beta$} \mor{3}{4}{$\alpha$}
\mor{1}{3}{$i^*$}[\atright, \solidarrow]
\mor{4}{2}{$j$}[\atright, \solidarrow] 
\enddc
\]
with such factorizations unique only up to adjustments by isomorphisms in $\mcI$.  The latter point of view leads to simpler
notation, so this is the approach we will take.  We will refer to $i^*$ as the \emph{cokernel} of $\beta$, and likewise we
will call $j$ its \emph{image}.

\begin{rem}
In any two three-fold factorizations of the given map $\beta$, the cokernels are equivalent morphisms
in the comma category $\mcI \downarrow a$.  Likewise, the images are equivalent in $\mcI \downarrow b$.  Hence
three-fold factorizations are unique if we require the $\mcI$-components to lie in a fixed skeleton of the relevant
comma category.
\end{rem}

Composition of such morphisms is defined in terms of pullbacks in the indexing category $\mcI$ and the two axioms
for conjugation.  The composition of $a \stackrel{i}{\longleftarrow} w \stackrel{\alpha}{\longrightarrow} x
\stackrel{j}{\longrightarrow} b$ with $b \stackrel{k}{\longleftarrow} y \stackrel{\gamma}{\longrightarrow} z
\stackrel{l}{\longrightarrow} c$ is displayed in the following diagram:
\[
\begindc{0}[3]
\obj(-22,19)[1]{$p$}
\obj(0,19)[2]{$q$}
\obj(22,19)[3]{$r$}
\obj(-33,0)[4]{$w$}
\obj(33,0)[5]{$z$}
\obj(-44,-19)[6]{$a$}
\obj(44,-19)[7]{$c$} \obj(46,-20)[p]{.}
\obj(-11,0)[8]{$x$}
\obj(11,0)[9]{$y$}
\obj(0,-19)[10]{$b$}
\mor{1}{2}{$\alpha'$}
\mor{2}{3}{$\gamma'$}
\mor{1}{4}{$k''$}[\atright, \solidarrow]
\mor{2}{8}{$k'$}[\atright, \solidarrow]
\mor{2}{9}{$j'$}
\mor{3}{5}{$j''$}
\mor{4}{8}{$\alpha$}
\mor{9}{5}{$\gamma$}
\mor{4}{6}{$i$}[\atright, \solidarrow]
\mor{8}{10}{$j$}
\mor{9}{10}{$k$}[\atright, \solidarrow]
\mor{5}{7}{$l$}
\enddc
\]
The middle diamond is the pullback of $j$ along $k$, formed in $\mcI$.  The upper-left square is the pullback
of $\alpha$ along $k'$ per the second axiom for conjugation; hence $\alpha' \in \mcA$ and $k'' \in \mcI$.  According
to the first axiom, the map $\gamma \circ j'$ admits a factorization of the form $j'' \circ \gamma'$ where
$j'' \in \mcI$ and $\gamma' \in \mcA$; this is the upper-right square.  Hence the composition is given by
$(l j'') \circ (\gamma' \alpha') \circ (ik'')^*$.

Of course, none of the steps in the composition are necessarily uniquely determined.  However, it is easy to check that
different choices would lead to equivalent morphisms (thanks to the axioms for conjugation), so the composition is in fact well-defined.
This completes the description of the category $\mcB = \mcI \circ \mcA \circ \mcI^\op$.

\begin{defn}\label{conj_pair_def}
A pair $(\mcB, \mcA)$ of small categories is a \textit{conjugate pair} if there exists a factorization
$\mcU = \mcI \circ \mcA$ admitting conjugation with $\mcB$ equivalent to the category $\mcI \circ \mcA \circ \mcI^\op$.
\end{defn}

\begin{prop}\label{composition_relations}
Suppose that $(\mcB, \mcA)$ is a conjugate pair arising from the factorization $\mcU = \mcI \circ \mcA$.
\begin{enumerate}
\item[(a)]  For maps $i, j \in \mcI$, we have $(j \circ i)^* = i^* \circ j^*$ whenever the composition is defined.
\item[(b)]  For any map $i \in \mcI$, we have $i^* \circ i = 1$.
\item[(c)]  If $i \in \mcI$ and both $\alpha, i^* \circ \alpha \in \mcA$, then $i$ must be an isomorphism.
\end{enumerate}
\end{prop}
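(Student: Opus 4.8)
The plan is to establish all three relations by applying the composition rule for $\mcB$ to conveniently chosen three-fold representatives and tracking how far the pullbacks and factorizations appearing in that rule degenerate. Only part (c) uses the conjugation axioms in an essential way; (a) and (b) amount to bookkeeping.

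For (a) and (b), the point is that each of $i$, $i^*$, $j^*$ has a three-fold representative all of whose data are identities except for the single indicated $\mcI$-map: in $\mcB$ we may write $i = i\circ 1\circ 1^*$ and $i^* = 1\circ 1\circ i^*$. To prove (a) I would compose $1\circ 1\circ j^*$ with $1\circ 1\circ i^*$ using the composition recipe for $\mcB$. The middle diamond is then the pullback of an identity along $i$, the upper-left square is again the pullback of an identity along $i$, and the upper-right step is a factorization of an identity; each may be taken to be an identity, so the only surviving datum is the composite of the two cokernels --- namely $j$ (the cokernel of $j^*$) followed by the pulled-back copy of $i$ --- which is $j\circ i$. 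Hence $i^*\circ j^* = (j\circ i)^*$. For (b), composing $i$ with $1\circ 1\circ i^*$ produces a middle diamond equal to the pullback of $i$ along itself; since every map in an indexing category is monic, this pullback is the identity on the source of $i$, all remaining steps collapse, and the composite is the identity.

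For (c) I would compute $i^*\circ\alpha$ the same way, using the representatives $\alpha = 1\circ\alpha\circ 1^*$ (legitimate since $\alpha\in\mcA$) and $i^* = 1\circ 1\circ i^*$. Now the only non-degenerate step is the upper-left square, which by the second conjugation axiom is a pullback of $\alpha$ along $i$, producing $i'\colon p\to a$ in $\mcI$ and $\alpha'\colon p\to c$ in $\mcA$ with $\alpha\circ i' = i\circ\alpha'$. Thus $i^*\circ\alpha$ has a three-fold factorization whose cokernel is $(i')^*$ and whose image is an identity. If $i^*\circ\alpha$ in fact lies in $\mcA$, it also has the trivial three-fold factorization with identity cokernel; by the uniqueness of cokernels in $\mcI\downarrow a$ (the Remark following the construction of $\mcB$), the object $i'\colon p\to a$ of $\mcI\downarrow a$ is isomorphic to $1_a$, which forces $i'$ to be an isomorphism in $\mcI$. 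Then $\alpha\circ i' = i\circ\alpha'$ gives $\alpha = i\circ\bigl(\alpha'\circ (i')^{-1}\bigr)$, and $\alpha'\circ(i')^{-1}\in\mcA$ because $\Iso(\mcI)\subseteq\Iso(\mcA)$; so this exhibits $\alpha\in\mcU$ with a factorization of the form $\mcI\circ\mcA$. Comparing it with the trivial factorization $\alpha = 1_b\circ\alpha$ via the first conjugation axiom --- applied to the commutative square with top edge $\alpha$, left edge $\alpha'\circ(i')^{-1}$, bottom edge $i$, and right edge $1_b$ --- yields a lift $c\to b$ that is an isomorphism in $\mcI$, and commutativity of the triangle with $1_b$ forces this lift to be $i$ itself. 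Hence $i$ is an isomorphism.

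The routine-but-fiddly part is the index bookkeeping in the composition diagram for (a) and (b). The genuine content is (c), where the two things to get right are the identification of the cokernel of $i^*\circ\alpha$ with the $\mcI$-leg of the pullback of $\alpha$ along $i$, and the correct orientation of the square to which the first conjugation axiom is applied.
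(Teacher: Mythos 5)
Your proof is correct and takes essentially the same approach as the paper's. You work through the composition recipe explicitly for (a) and (b) where the paper is terser, and for (c) you arrive at the same pullback square (from the second conjugation axiom) and then deduce that $i$ is an isomorphism by invoking the first conjugation axiom directly, whereas the paper appeals to the equivalence relation defining $\mcB$ at that step; these are interchangeable.
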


\begin{proof}
The first two assertions follow immediately from the law of composition in $\mcB$.  For the second, one needs only
to recall that all maps in $\mcI$ are monic, hence the pullback (formed in $\mcI$) of
$i$ along itself may be given by completing the square with identity maps.

For the third claim, suppose that
\[
\begindc{0}[3]
\obj(-8,8)[a]{$p$} \obj(8,8)[b]{$a$} \obj(-8,-8)[c]{$c$} \obj(8,-8)[d]{$b$} \mor{a}{b}{$j$} \mor{c}{d}{$i$}
\mor{b}{d}{$\alpha$} \mor{a}{c}{$\gamma$}[\atright, \solidarrow]
\enddc
\]
is a pullback as in Definition~\ref{reg_fact_def}, so that $i^* \circ \alpha = \gamma \circ j^*$.  Since
$\gamma \circ j^* \in \mcA$, the diagram
\[
\begindc{0}[3]
\obj(-9,8)[1]{$p$}
\obj(9,8)[2]{$c$}
\obj(-9,-8)[3]{$a$}
\obj(9,-8)[4]{$c$} 
\obj(-23,0)[5]{$a$}
\obj(23,0)[6]{$c$}
\mor{1}{2}{$\gamma$}
\mor{3}{4}{$\gamma j^*$}
\mor{1}{3}{$j$}[\atright, \solidarrow]
\mor{2}{4}{$1$}
\mor{1}{5}{$j$}[\atright, \solidarrow]
\mor{2}{6}{$1$}
\mor{3}{5}{$1$}
\mor{4}{6}{$1$}[\atright, \solidarrow]
\enddc
\]
shows that $j$ must be an isomorphism in $\mcI$, hence $\alpha = i \circ \gamma \circ j^{-1}$.  Since
$\Iso(\mcI) \subseteq \Iso(\mcA)$, we see that $\gamma \circ j^{-1} \in \mcA$, and the same sort of equivalence
diagram shows that $i$ is an isomorphism.
\end{proof}


\subsection{Examples}\label{examples_section}

We start with the smallest and perhaps most instructive example.

\begin{example}\label{idem_example}
Let $\mcI$ be the category consisting of two objects and only one non-identity map, say
\[
\begindc{0}[3]
\obj(-7,0)[0]{$0$} \obj(7,0)[1]{$1$} \mor{0}{1}{$i$}
\obj(9,-1){.}
\enddc
\]
Taking $\mcA$ to be discrete, the resulting category $\mcB = \mcI \circ \mcI^\op$ has diagrammatic representation
\[
\begindc{0}[3]
\obj(-7,0)[0]{$0$} \obj(7,0)[1]{$1$}
\obj(-7,1)[0']{} \obj(7,1)[1']{}
\obj(-7,-1)[0'']{} \obj(7,-1)[1'']{}
\mor{0'}{1'}{$i$} \mor{1''}{0''}{$i^*$}
\enddc
\]
where $i^* \circ i = 1$.  Note then that $i \circ i^*$ is an idempotent.

At this point it is instructive to recall what our main theorem would say here.  It would assert that, for semi-stable
model categories $\mcC$, the functor category $[\mcB^\op, \mcC]$ is equivalent to $[\mcA^\op, \mcC]$.  As $\mcA$ is
discrete, the latter is simply $\mcC \times \mcC$.  Hence the theorem says
that to give a diagram
\[
\begindc{0}[3]
\obj(-7,0)[0]{$M$} \obj(7,0)[1]{$N$}
\obj(-7,1)[0']{} \obj(7,1)[1']{}
\obj(-7,-1)[0'']{} \obj(7,-1)[1'']{}
\mor{0'}{1'}{$i$} \mor{1''}{0''}{$i^*$}
\enddc
\]
in $\mcC$ with $i \circ i^*$ an idempotent is equivalent to giving two objects in $\mcC$ (namely, $M$ and the ``kernel" of
$i^*$).  Hence our main theorem is essentially a statement about the ability to split idempotents in $\mcC$; this is
the elegant explanation of our work.  (Of course, we are concerned with obtaining a Quillen equivalence and so we
are splitting idempotents in the homotopy category, not $\mcC$ itself.)
\end{example}

\begin{example}\label{poset_ex2}
Let $\mcI$ be the category of a partially ordered set $P$ as in Example \ref{poset_ex1}.  With $\mcA$ discrete, the category
$\mcB= \mcI \circ \mcI^\op$ has the following description.  A morphism $a: x \rightarrow y$ in $\mcB$ is just the statement that
$a \leq x$ and $a \leq y$.  The composition of $a: x \rightarrow y$ and $b: y \rightarrow z$ is the greatest
lower bound of $a$ and $b$.  The category $\mcI$ appears in $\mcB$ as the maps $x: x \rightarrow y$ and similarly the maps
$x: y \rightarrow x$ represent $\mcI^\op$ as a subcategory.  We now see that each map $a : x \rightarrow y$ in $\mcB$
factors uniquely as
\[
\begindc{0}[3]
\obj(-10,8)[x]{$x$}
\obj(10,8)[y]{$y$}
\obj(0,-8)[a]{$a$}
\mor{x}{y}{$a$}
\mor{x}{a}{$a$}[\atright, \solidarrow]
\mor{a}{y}{$a$}[\atright, \solidarrow]
\obj(2,-9){.}
\enddc
\]
Hence $(\mcB, \mcI)$ is a conjugate pair.
\end{example}

\begin{example}\label{iso_ex}
The two previous examples generalize: every indexing category gives rise to a conjugate pair.  Given
an indexing category $\mcI$, let $\mcA$ be the category of isomorphisms in $\mcI$.  It is immediate that the axioms
for conjugation are satisfied, and so we obtain a category $\mcB$ with $(\mcB, \Iso(\mcI))$ a conjugate pair.  This
is the non-additive version of the induction categories of \cite{tD}.
\end{example}


\begin{example}\label{P_example}
Here we obtain Pirashvili's example \cite{TP}, namely that $(\Gamma, \mcE)$ is a conjugate pair.  Recall the terminology
and notation of Example~\ref{unbased_admits_cong}, where we saw that $\mcU = \mcO \circ \mcE$ admits conjugation.  We
claim that the category $\mcB$ is equivalent to $\Gamma$.  Recalling that $\mcO^\op$ is equivalent to the category $\mcO^*$ of collapse maps, we show that every
map in $\Gamma$ admits an internal three-fold factorization of the type $\mcO \circ \mcE \circ \mcO^*$.

Fix a
$\Gamma$-map $\gamma: \mplus \rightarrow \nplus$ and let $i: \rplus \rightarrow \mplus$ be the $\mcO$-map representing the
complement of the ``kernel" $\gamma^{-1}(0)$ of $\gamma$.  Upon using $i^*$ to collapse the kernel to a point, $\gamma$ induces a regular map $\overline{\gamma}: \rplus \rightarrow
\nplus$.  This regular map $\overline{\gamma}$ then admits a factorization by an $\mcE$-map $\gamma': \rplus \rightarrow \splus$ followed by the map $j: \splus
\rightarrow \nplus$ representing the image of $\gamma$ (which is also the image of $\overline{\gamma}$).  Thus $\gamma$ admits a three-fold factorization
as $\gamma = j \circ \gamma' \circ i^*$:
\[
\begindc{0}[3]
\obj(-9,8)[1]{$\mplus$} \obj(9,8)[2]{$\nplus$} \obj(-9,-8)[3]{$\rplus$}
\obj(9,-8)[4]{$\splus$} \mor{1}{2}{$\gamma$} \mor{3}{4}{$\gamma'$}
\mor{1}{3}{$i^*$}[\atright, \solidarrow]
\mor{4}{2}{$j$}[\atright, \solidarrow] \obj(12,-9){.}
\enddc
\]
Hence $(\Gamma, \mcE)$ is a conjugate pair.  By assuming all maps are weakly monotone one can also obtain a simplicial version
of this example.
\end{example}

\begin{example}
The previous example may be fattened up a bit.  Let $\mcB$ denote the category with objects the finite based subsets $A_+$
of the natural numbers (with $0$ acting as the basepoint) and morphisms the based maps.  With $\mcA$ the subcategory
of regular based surjections and $\mcI$ as in Example~\ref{N_subsets}, we have that $(\mcB, \mcA)$ forms a conjugate pair.
\end{example}

\begin{example}
Let $\mcB$ denote the category of $\Gamma$-maps $\beta$ such that the inverse image $\beta^{-1}(x)$ of each nonzero point $x$ is either empty or
a singleton.  That is, $\beta \in \mcB$ may send lots of elements to the basepoint, but modulo this, it is injective.
With $\Sigma$ denoting the category of regular permutations, $\mcB$ factors as $\mcO \circ \Sigma \circ \mcO^*$
and $(\mcB, \Sigma)$ is a conjugate pair.
\end{example}


\section{Two Natural Decompositions}\label{nat_section}

For the remainder of this paper, $(\mcB, \mcA)$ will denote a fixed conjugate pair arising from a factorization
$\mcU = \mcI \circ \mcA$.  In this section we construct the bimodule $\mcU_+: \mcA^\op \times \mcB \rightarrow \Sets_*$ which will induce
our Quillen equivalence.  In Propositions~\ref{reg_free} and \ref{reg_gen} we show that this bimodule
is ``free" as a right $\mcA$-module and a ``generator" in its left $\mcB$-module structure.

\begin{defn}
The maps in $\mcB$ lying in the subcategory $\mcU = \mcI \circ \mcA$ will be called the \textit{regular maps}.  If a map is not regular,
we will say it is \textit{singular}.
\end{defn}

We remark that a map $\beta = j \circ \alpha \circ i^*$ is regular if and only if $i$ is an isomorphism in $\mcI$.  Since
$\Iso(\mcI) \subseteq \Iso(\mcA)$, such a map is always equal to one with the identity map as its cokernel, as shown
by the diagram
\[
\begindc{0}[3]
\obj(-9,8)[1]{$a_1$}
\obj(9,8)[2]{$b_1$}
\obj(-9,-8)[3]{$a$}
\obj(9,-8)[4]{$b_2$} \obj(12,-10)[p]{.}
\obj(-23,0)[5]{$a$}
\obj(23,0)[6]{$b$}
\mor{1}{2}{$\alpha$}
\mor{3}{4}{$\alpha i^{-1}$}
\mor{1}{3}{$i$}[\atright, \solidarrow]
\mor{2}{4}{$1$}
\mor{1}{5}{$i$}[\atright, \solidarrow]
\mor{2}{6}{$j$}
\mor{3}{5}{$1$}
\mor{4}{6}{$j$}[\atright, \solidarrow]
\enddc
\]

\begin{bold_notation}
Let $\mcS(a,b)$ denote the set of singular maps in $\mcB(a, b)$.  We let $\mcU(a,b)_+$ denote the quotient set
\[
\mcU(a,b)_+ = \mcB(a,b) / \mcS(a,b)
\]
where we take the singular maps as a basepoint.  As $\mcB(a,b)$ is the
disjoint union of the regular and singular maps, this may also be regarded as the set of regular maps together
with a disjoint basepoint (thus this notation is sensible).
\end{bold_notation}

\begin{prop}\label{closure}
In any conjugate pair $(\mcB, \mcA)$, the singular maps have the following ``ideal-like" properties:
\begin{enumerate}
\item[(a)]  If $\alpha \in \mcA$ and $\gamma$ is singular, then $\gamma \circ \alpha$ is singular when defined.
\item[(b)]  If $\beta \in \mcB$ and $\sigma$ is singular, then $\beta \circ \sigma$ is singular when defined.
\end{enumerate}
\end{prop}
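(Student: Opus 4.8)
The plan is to prove each statement by examining the three-fold factorization of the composite and showing that its cokernel component cannot be an isomorphism. Recall that a map in $\mcB$ is regular precisely when it is equal to one whose cokernel is the identity, i.e.\ when the $\mcI$-component on the left may be taken to be an isomorphism. So I will argue by contradiction in each case: assume the composite is regular, use the recipe for composition in $\mcB$ (pullbacks in $\mcI$ together with the two axioms for conjugation) to write down its three-fold factorization explicitly, and then derive that the relevant $\mcI$-component of $\gamma$ (resp.\ $\sigma$) must have been an isomorphism, contradicting singularity.

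For part (a), write $\gamma = j \circ \delta \circ k^*$ with $k, j \in \mcI$ and $\delta \in \mcA$, where $k$ is not an isomorphism (this is what it means for $\gamma$ to be singular). To compose $\gamma$ with $\alpha \in \mcA$ on the right, I form the composite $\gamma \circ \alpha$: since $\alpha$ is itself an $\mcA$-map with identity cokernel, the composition recipe requires only the pullback of the identity (the image of $\alpha$ is all of its codomain, or rather $\alpha$ contributes no nontrivial $\mcI$-data), so the cokernel of $\gamma \circ \alpha$ is again (equivalent to) $k^*$. More carefully: in the general composition diagram in Section~\ref{conj_pair_section}, composing on the left by a pure $\mcA$-map leaves the leftmost $\mcI$-component untouched up to the isomorphisms that the axioms permit. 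Hence the cokernel of $\gamma \circ \alpha$ is equivalent in $\mcI \downarrow (\dom \alpha)$ to $k^*$; since $k$ is not an isomorphism, $\gamma \circ \alpha$ is singular.

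For part (b), write $\sigma = j \circ \delta \circ k^*$ singular, so $k$ is a non-isomorphism in $\mcI$, and let $\beta = l \circ \varepsilon \circ m^*$ be arbitrary in $\mcB$. Following the composition recipe for $\beta \circ \sigma$, one first pulls $l$ back along $j$ in $\mcI$ to get $l'' \in \mcI$ and an $\mcI$-map feeding into the upper-left square; then applies the second conjugation axiom to pull $\varepsilon$ back along the resulting map, producing $\varepsilon' \in \mcA$ and $m'' \in \mcI$; then applies the first axiom to the upper-right square. The cokernel of $\beta \circ \sigma$ then has the form $(k \circ m'')^* = m''^* \circ k^*$ by Proposition~\ref{composition_relations}(a). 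I must show this composite cokernel is not an isomorphism. This is where Proposition~\ref{composition_relations}(c) does the work: if $(k \circ m'')^*$ were an isomorphism, then $k \circ m''$ would be an isomorphism in $\mcI$; but $k$ is monic (all maps in an indexing category are monic) and divides an isomorphism on the left, which forces $k$ to be a split epi and hence, being also monic in a category with pullbacks where retracts are isomorphisms, an isomorphism --- contradicting the singularity of $\sigma$. So $\beta \circ \sigma$ is singular.

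The main obstacle I anticipate is bookkeeping: making precise the claim that the leftmost $\mcI$-component of the three-fold factorization is ``the same'' $k^*$ up to the allowable isomorphisms, given that none of the pullbacks or factorizations in the composition recipe are canonically determined. The clean way to handle this is to phrase everything in terms of the equivalence relation on representing diagrams, or equivalently to fix skeleta of the comma categories $\mcI \downarrow a$ so that three-fold factorizations become genuinely unique (as noted in the Remark following the definition of $\mcB$), and then observe that passing a non-isomorphism cokernel through composition cannot produce an identity. The conjugation axioms guarantee every intermediate choice only changes things by an $\mcI$-isomorphism, so the property of the cokernel ``being an isomorphism'' is well-defined on equivalence classes, and that is exactly the invariant I am tracking.
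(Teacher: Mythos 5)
Your proof of part~(b) is essentially the paper's: the cokernel of $\beta \circ \sigma$ has the form $(k \circ m'')^*$ where $k^*$ is the cokernel of $\sigma$, and since in the EI-category $\mcI$ a map that left-divides an isomorphism must itself be an isomorphism, $k$ would have to be an isomorphism if $\beta\circ\sigma$ were regular. That argument is sound.

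Part~(a), however, contains a genuine gap. You claim that ``since $\alpha$ is itself an $\mcA$-map with identity cokernel, the composition recipe requires only the pullback of the identity \ldots\ so the cokernel of $\gamma \circ \alpha$ is again (equivalent to) $k^*$.'' This is not correct. Tracing the composition recipe from Section~\ref{conj_pair_section}: the first map in the composite $\gamma\circ\alpha$ is $\alpha$ (with $i=j=1$), and the second is $\gamma$ (with cokernel $k^*$). The middle diamond (pullback of the trivial image of $\alpha$ along $k$) is indeed trivial, but the \emph{upper-left square}---the pullback of $\alpha$ itself along $k$---is not. This pullback produces a new $\mcI$-map $k''$, and it is $k''^*$, not $k^*$, that becomes the cokernel of $\gamma\circ\alpha$. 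The map $k''$ has a different codomain than $k$ (namely $\dom(\alpha)$, not $\dom(\gamma)$) and there is no reason it should be ``equivalent'' to $k$; you have not shown that $k''$ is a non-isomorphism, which is the actual assertion to be proved. The paper bridges this exact gap: writing the cokernel of $\gamma$ as $i^*$, it observes that the cokernel of $\gamma\circ\alpha$ agrees with the cokernel of $i^*\circ\alpha$; assuming for contradiction that this cokernel is trivial, one gets $i^*\circ\alpha = l\circ\delta$ with $\delta\in\mcA$, $l\in\mcI$, whence $(i\circ l)^*\circ\alpha = \delta \in \mcA$, and Proposition~\ref{composition_relations}(c) then forces $i\circ l$---and hence, using the EI-property, $i$---to be an isomorphism, contradicting the singularity of $\gamma$. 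This extra step via Proposition~\ref{composition_relations}(c) is precisely what your argument is missing, and without it the claim does not follow.
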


\begin{proof}
We prove only the first statement; the proof of the second is similar (and simpler).  Factor $\gamma$ as $\gamma = j \circ
\gamma' \circ i^*$ and suppose that $\gamma \circ \alpha$ is regular.  If $i^* \circ \alpha$ factors as
$i^* \circ \alpha = l \circ \delta \circ k^*$ then the cokernel of $\gamma \circ \alpha$ is still $k^*$.  Since
$\gamma \circ \alpha$ is regular, $k$ must be an isomorphism in $\mcI$, and we can now arrange to take $k$ to be the identity by
our previous remarks.  From $i^* \circ \alpha = l \circ \delta$ we compose on the left with $l^*$, and Proposition~\ref{composition_relations}
gives that $(i \circ l)^* \circ \alpha = \delta \in \mcA$, and hence $i \circ l$ must be an isomorphism in $\mcI$.  We now
see that $i$ is a retract in the EI-category $\mcI$, and hence $i$ is an isomorphism.  Thus $\gamma$ is regular.
\end{proof}

\begin{prop}
Given a conjugate pair $(\mcB, \mcA)$, the construction $\mcU(-,-)_+$ defines a functor $\mcA^\op \times
\mcB \rightarrow \Sets_*$.
\end{prop}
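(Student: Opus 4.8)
The plan is to define $\mcU(-,-)_+$ on morphisms using composition in $\mcB$, then to check well-definedness and functoriality; the only non-formal input is the ideal-like behavior of the singular maps recorded in Proposition~\ref{closure}.

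First I would recall that both $\mcA$ and $\mcU$ sit inside $\mcB$ as subcategories, so that for $\alpha \in \mcA(a',a)$, $\beta \in \mcB(b,b')$, and any $\gamma \in \mcB(a,b)$ the composite $\beta \circ \gamma \circ \alpha$ is a well-defined element of $\mcB(a',b')$. Thus pre-composition by $\alpha$ together with post-composition by $\beta$ gives a function $\mcB(a,b) \to \mcB(a',b')$. The key claim is that this function carries $\mcS(a,b)$ into $\mcS(a',b')$: if $\sigma \in \mcB(a,b)$ is singular, then $\sigma \circ \alpha$ is singular by Proposition~\ref{closure}(a), whence $\beta \circ (\sigma \circ \alpha)$ is singular by Proposition~\ref{closure}(b). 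Therefore the function descends to a based map
\[
\mcU(\alpha, \beta)_+ \colon \mcU(a,b)_+ \longrightarrow \mcU(a',b')_+
\]
sending the class of a regular map $\gamma$ to the class of $\beta \circ \gamma \circ \alpha$ and fixing the basepoint. This assignment is contravariant in the $\mcA$-variable and covariant in the $\mcB$-variable, so it is the claimed behavior on morphisms of $\mcA^\op \times \mcB$. Conceptually, $\mcU(-,-)_+$ is nothing but the hom-functor of the subcategory $\mcU$, with the disjoint basepoint absorbing those composites that fall out of $\mcU$ when one post-composes a regular map by a general map of $\mcB$.

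Next I would verify the two functor axioms. Identities are preserved because $\mcU(1_a, 1_b)_+$ is induced by $\gamma \mapsto 1_b \circ \gamma \circ 1_a = \gamma$, hence is the identity on $\mcU(a,b)_+$. For composites, given $\alpha \in \mcA(a',a)$, $\alpha' \in \mcA(a'',a')$, $\beta \in \mcB(b,b')$, and $\beta' \in \mcB(b',b'')$, both $\mcU(\alpha \circ \alpha', \beta' \circ \beta)_+$ and $\mcU(\alpha', \beta')_+ \circ \mcU(\alpha, \beta)_+$ are the based maps induced by the single function $\gamma \mapsto \beta' \circ \beta \circ \gamma \circ \alpha \circ \alpha'$ on $\mcB(a,b)$; they agree because composition in $\mcB$ is associative and both genuinely descend from that function along the quotients $\mcB(-,-) \to \mcU(-,-)_+$. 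Hence $\mcU(-,-)_+$ is a functor $\mcA^\op \times \mcB \to \Sets_*$.

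I do not anticipate a real obstacle. Once Proposition~\ref{closure} guarantees that the singular maps absorb pre-composition by $\mcA$-maps and post-composition by $\mcB$-maps, the argument is purely formal; the only thing demanding a moment's care is the bookkeeping of the two variances and the observation that $\mcA \subseteq \mcU \subseteq \mcB$, so that the relevant three-fold composites genuinely live in $\mcB$.
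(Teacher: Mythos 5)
Your proposal is correct and follows the same route as the paper: define the action on morphisms by $\gamma \mapsto \beta \circ \gamma \circ \alpha$ and invoke Proposition~\ref{closure} to see that singular maps are carried to singular maps, so the operation descends to the quotient $\mcU(-,-)_+$. The paper's proof is essentially this argument stated more briefly; your explicit verification of the identity and composition axioms is sound and is the sort of routine bookkeeping the paper leaves implicit.
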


\begin{proof}
Suppose we are given morphisms $\alpha: a \rightarrow b$ in $\mcA$
and $\beta: c \rightarrow d$ in $\mcB$.  We then get an associated
map $\mcB(b,c) \rightarrow \mcB(a,d)$ which sends a map $\gamma: b
\rightarrow c$ to the composite $\beta \circ \gamma \circ \alpha$.
By Proposition~\ref{closure}, this sends singular maps to
singular maps.  Hence this passes down to quotients, as desired.
\end{proof}

The functor $\mcU_+: \mcA^\op \times \mcB \rightarrow \Sets_*$ is the bimodule desired for our Morita equivalence.
We will refer to this functor as the \textit{regular bimodule}.

\begin{lemma}\label{A_funct}
Suppose that  $(\mcB, \mcA)$ is a conjugate pair and fix an object $a$ of $\mcA$.  Every map $\beta:
b \rightarrow c$ in $\mcB$ induces a map
\[
\beta_*: \bigvee_{i \in \sk(\mcI \downarrow b)} \mcA(a, \dom(i))_+ \longrightarrow
\bigvee_{j \in \sk(\mcI \downarrow c)} \mcA(a, \dom(j))_+ \ .
\]
Furthermore, this assignment is functorial, so that wedge sums of
the form
\[
\bigvee_{i \in \sk(\mcI \downarrow b)} \mcA(a, \dom(i))_+
\]
give a functor $\mcB \rightarrow \Sets_*$ in the $b$-variable.
\end{lemma}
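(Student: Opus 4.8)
The plan is to realize the stated wedge sum as a familiar based set and then import functoriality rather than build it by hand. Specifically, I claim that for each object $b$ of $\mcB$ there is a bijection of based sets
\[
\Psi_b \colon \bigvee_{i \in \sk(\mcI \downarrow b)} \mcA(a, \dom(i))_+ \;\longrightarrow\; \mcU_+(a,b),
\]
where $\mcU_+(a,b)$ is the set of regular maps $a \to b$ with a disjoint basepoint (the class of the singular maps). The map $\Psi_b$ sends the element $\phi \in \mcA(a, \dom(i))$ sitting in the $i$-summand to the map $i \circ \phi$ — which is regular, since it lies in $\mcI \circ \mcA = \mcU$ — and sends every basepoint to the basepoint. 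Granting that each $\Psi_b$ is a bijection, I would transport the functor $\mcU_+(a,-) \colon \mcB \to \Sets_*$ of the preceding proposition across the $\Psi_b$: for $\beta \colon b \to c$ set $\beta_* := \Psi_c^{-1} \circ \mcU_+(a, \beta) \circ \Psi_b$. This simultaneously defines the claimed $\mcB$-functor structure on the wedge sums and exhibits $\Psi$ as a natural isomorphism, so functoriality — $(\mathrm{id}_b)_* = \mathrm{id}$ and $(\beta' \circ \beta)_* = \beta'_* \circ \beta_*$ — is immediate from functoriality of $\mcU_+(a,-)$ in its $\mcB$-variable. Unwinding the definitions, $\beta_*$ carries $\phi$ in the $i$-summand to the basepoint when $\beta \circ i \circ \phi$ is singular, and otherwise to the unique $\psi$ in the $j$-summand, where $j \in \sk(\mcI \downarrow c)$ is the image of $\beta \circ i \circ \phi$ and $\beta \circ i \circ \phi = j \circ \psi$; this is the expected ``refactor the regular part'' recipe.

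So everything reduces to showing each $\Psi_b$ is a bijection. For surjectivity, a regular map $f \colon a \to b$ lies in $\mcU = \mcI \circ \mcA$, hence equals $i_0 \circ \phi_0$ with $i_0 \in \mcI$ and $\phi_0 \in \mcA$; replacing $i_0$ by its representative $i$ in $\sk(\mcI \downarrow b)$ via an isomorphism $u$ of $\mcI$ and setting $\phi := u \circ \phi_0$ — which lies in $\mcA$ because $\Iso(\mcI) \subseteq \Iso(\mcA)$ — gives $f = i \circ \phi = \Psi_b(\phi)$. For injectivity, suppose $i \circ \phi = i' \circ \phi'$ with $i, i' \in \sk(\mcI \downarrow b)$ and $\phi, \phi' \in \mcA$. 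Reading this as a commutative square with the two $\mcA$-maps $\phi, \phi'$ on one pair of sides and the two $\mcI$-maps $i, i'$ on the other, the first axiom for conjugation in Definition~\ref{reg_fact_def} furnishes a lift $\ell$ that is an isomorphism in $\mcI$ with $i \circ \ell = i'$ and $\ell \circ \phi' = \phi$; thus $\ell$ is an isomorphism $i' \to i$ in the comma category $\mcI \downarrow b$. Since $i$ and $i'$ both lie in the skeleton, which is a partially ordered set by Proposition~\ref{poset}, this forces $i = i'$ and $\ell = \mathrm{id}$, whence $\phi = \phi'$.

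The only genuine content here is the bijectivity of $\Psi_b$, and within that the injectivity step, where the first conjugation axiom, the rigidity of $\sk(\mcI \downarrow b)$ (Proposition~\ref{poset}), and the inclusion $\Iso(\mcI) \subseteq \Iso(\mcA)$ do all the work; the rest is bookkeeping with basepoints and transport of structure. I do not anticipate any real obstacle.
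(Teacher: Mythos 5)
Your proof is correct, but it takes a genuinely different route from the paper's. The paper defines $\beta_*$ directly: on the summand indexed by $i \colon b' \to b$, it sends $\alpha \in \mcA(a,b')$ to the basepoint if $\beta \circ i \circ \alpha$ is singular and otherwise to the $\mcA$-component $\alpha'$ of the three-fold factorization $\beta i \alpha = j \circ \alpha' \circ 1^*$, landing in the $j$-summand; functoriality is then asserted to follow by a routine check using three-fold factorizations and Proposition~\ref{closure}. You instead establish the set-level bijection $\Psi_b$ with $\mcU(a,b)_+$ up front and transport the already-known $\mcB$-functoriality of the bimodule $\mcU(-,-)_+$ across it, so that $(\beta' \circ \beta)_* = \beta'_* \circ \beta_*$ is automatic. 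Your unwound description of $\beta_*$ agrees with the paper's, and the injectivity argument correctly deploys the first conjugation axiom together with monicity (equivalently, the skeletal poset structure of $\sk(\mcI \downarrow b)$ from Proposition~\ref{poset}) and $\Iso(\mcI) \subseteq \Iso(\mcA)$. What your approach buys is that the tedious case-checking of functoriality disappears entirely; what it costs is that it front-loads the bijection that the paper defers to Proposition~\ref{reg_free}, effectively merging that proposition's set-level content into this lemma. There is no circularity---the paper uses Lemma~\ref{A_funct} only to make the \emph{naturality} claim in Proposition~\ref{reg_free} well-posed, and you never invoke that naturality---but it does invert the paper's expository order. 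Both arguments rest on the same two pillars (the bimodule structure of $\mcU_+$, already established, and the conjugation axioms); you have simply arranged them so that the bijection does the work instead of a direct verification.
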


Verifying that this defines a functor is fairly straightforward,
and at worst consists of checking a few special cases.   The proof uses only three-fold factorizations
and Proposition~\ref{closure}, therefore we will only describe the induced map $\beta_*$.

Suppose we are in the summand corresponding to $i: b' \rightarrow
b$ in $\mcI$, and let $\alpha: a \rightarrow b'$ be a map in
$\mcA$.  We consider the composite
\[
a \stackrel{\alpha}{\longrightarrow} b'
\stackrel{i}{\longrightarrow} b \stackrel{\beta}{\longrightarrow}
c.
\]
If this composite is singular, we define $\beta_*(\alpha)$ to be
the basepoint.  Otherwise it is regular, and thus admits a
factorization
\[
\begindc{0}[3]
\obj(-8,8)[1]{$a$} \obj(8,8)[2]{$c$} \obj(-8,-8)[3]{$a$}
\obj(8,-8)[4]{$c'$} \mor{1}{2}{$\beta  i  \alpha$}
\mor{3}{4}{$\alpha'$} \mor{1}{3}{$1^*$}[\atright,
\solidarrow] \mor{4}{2}{$j$}[\atright, \solidarrow]
\enddc
\]
where $\alpha' \in \mcA$ and $j \in \sk(\mcI \downarrow c)$.  Note that when factored in this form, $\alpha'$ is uniquely
determined.  We then let $\beta_*(\alpha) = \alpha'$, corresponding to the summand indexed by $j$.

The next result states
that the bimodule $\mcU_+: \mcA^\op \times \mcB \rightarrow
\Sets_*$ is free as a right $\mcA$-module.  Note that the naturality claim in the following is well-posed by
the lemma.

\begin{prop}\label{reg_free}
Suppose that $(\mcB, \mcA)$ is a conjugate pair.  For each pair of objects $a$ and $b$ of $\mcB$, there is an isomorphism of based sets
\[
\mcU(a,b)_+ = \bigvee_{i \in \sk(\mcI \downarrow b)} \mcA(a, \dom(i))_+
\]
which is natural in both variables.  In other words, for each
object $b$ of $\mcB$ there is a natural equivalence
\[
\mcU(-,b)_+ = \bigvee_{i \in \sk(\mcI \downarrow b)} \mcA(-,\dom(i))_+
\]
of functors $\mcA^\op \rightarrow \Sets_*$ and these equivalences
vary naturally with $b$.
\end{prop}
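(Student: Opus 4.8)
=== PROOF PROPOSAL ===

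The plan is to construct the claimed bijection $\mcU(a,b)_+ \cong \bigvee_{i \in \sk(\mcI \downarrow b)} \mcA(a, \dom(i))_+$ directly from three-fold factorizations, and then check naturality against the action maps already described in Lemma~\ref{A_funct} (in the $b$-variable) and against precomposition by $\mcA$-maps (in the $a$-variable). First I would define the forward map: a nonbasepoint element of $\mcU(a,b)_+$ is a regular map $\beta\colon a \rightarrow b$, i.e.\ a map in $\mcU = \mcI \circ \mcA$. Such a $\beta$ factors as $\beta = i \circ \alpha$ with $i \in \mcI$ and $\alpha \in \mcA$; after replacing $i$ by the unique isomorphic representative in $\sk(\mcI \downarrow b)$ and adjusting $\alpha$ accordingly (legitimate since $\Iso(\mcI) \subseteq \Iso(\mcA)$), we get a well-defined class: send $\beta$ to $\alpha \in \mcA(a,\dom(i))_+$ in the summand indexed by $i$. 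Send all singular maps to the basepoint. The key point making this well-defined is that the $\mcI$-component of a regular map is determined up to the equivalence in the comma category $\mcI \downarrow b$ — this is exactly the Remark following the description of three-fold factorizations, together with Proposition~\ref{composition_relations}(c) to pin down that the $\mcA$-component is then unique. I would also note the degenerate edge case: a singular $\beta$ contributes nothing, matching the disjoint basepoint on the right.

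Next I would construct the inverse: given $i \in \sk(\mcI \downarrow b)$ with $i\colon b' \rightarrow b$ and a map $\alpha \in \mcA(a, b')$, form the composite $i \circ \alpha\colon a \rightarrow b$. This lies in $\mcU = \mcI \circ \mcA$ by definition, hence is regular, so it determines an element of $\mcU(a,b)_+$. I would check this is inverse to the forward map in both directions: one composite is the identity essentially by construction, and the other requires knowing that recovering $(i,\alpha)$ from $i\circ\alpha$ and then rebuilding gives back the same class — again the uniqueness of three-fold factorizations once the $\mcI$-part is forced into the fixed skeleton. This handles the claimed isomorphism of based sets for each pair $(a,b)$.

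For naturality in $b$: I would show that under the identification just built, the map $\mcU(a,\beta)_+\colon \mcU(a,b)_+ \rightarrow \mcU(a,c)_+$ induced by postcomposition with $\beta\colon b \rightarrow c$ corresponds precisely to the map $\beta_*$ of Lemma~\ref{A_funct}. This is essentially a matter of unwinding both definitions: given a regular $i\circ\alpha\colon a \rightarrow b$, postcomposing with $\beta$ and then taking its three-fold factorization is, step for step, the recipe given after Lemma~\ref{A_funct} for computing $\beta_*(\alpha)$ — feed $a \xrightarrow{\alpha} b' \xrightarrow{i} b \xrightarrow{\beta} c$, test singularity (which matches via Proposition~\ref{closure}(b)), and read off the resulting $\mcA$-component and skeletal $\mcI$-index. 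Naturality in $a$ is easier: precomposition by $\alpha_0 \in \mcA(a',a)$ sends a regular map $i\circ\alpha$ to $i\circ(\alpha\circ\alpha_0)$, which on the right-hand side is visibly the summand-wise action $\mcA(\alpha_0,\dom(i))_+$, and singular maps stay singular by Proposition~\ref{closure}(a). The final observation is that these two naturality statements are compatible — they commute in the bifunctor $\mcA^\op \times \mcB \rightarrow \Sets_*$ — which follows formally since both are induced by the single construction $\beta \mapsto (\text{three-fold factorization of }\beta)$.

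The main obstacle I expect is bookkeeping around the non-uniqueness of three-fold factorizations: one must be scrupulous that every choice made (which skeletal representative of the $\mcI$-component, which lift through an isomorphism) is either canonical after passing to $\sk(\mcI \downarrow b)$ or else manifestly independent of the choice up to the equivalence relation defining morphisms of $\mcB$. The two axioms for conjugation (Definition~\ref{reg_fact_def}) and Proposition~\ref{composition_relations} are exactly what make all of this coherent, but threading them through the naturality-in-$b$ verification — matching it cleanly with $\beta_*$ rather than re-deriving $\beta_*$ — is where the care is needed.
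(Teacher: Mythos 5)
Your proposal is correct and takes essentially the same route as the paper, which gives only a brief sketch of this proof: factor a regular map as $\beta = i \circ \alpha$ with $i$ forced into the fixed skeleton, send $\beta$ to $\alpha$ in the $i$-summand, send singular maps to the basepoint, and read naturality in $b$ off of Lemma~\ref{A_funct}'s $\beta_*$. One small misattribution: once $i$ is pinned to the skeleton (via the first conjugation axiom and $\Iso(\mcI)\subseteq\Iso(\mcA)$), the uniqueness of the $\mcA$-component $\alpha$ comes from $i$ being monic in an indexing category (equivalently, Proposition~\ref{composition_relations}(b), $i^*\circ i = 1$), not from Proposition~\ref{composition_relations}(c).
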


The idea of the proof is simple:  every regular map $\gamma: a \rightarrow b$ admits a factorization by a map
$\alpha: a \rightarrow b'$ in $\mcA$ followed by the inclusion $i: b' \rightarrow b$ of the image of $\gamma$ back
into the codomain.  Once a skeleton has been fixed, such a factorization is unique.  Hence under our isomorphism,
$\gamma$ corresponds to $\alpha \in \mcA(a,b')_+$, landing in the summand indexed by $i$.

Next we carry out a similar analysis in the other variable.

\begin{lemma}\label{reg_funct}
Suppose that $(\mcB, \mcA)$ is a conjugate pair and fix an object $c$ of $\mcB$.  Every map $\beta:
a \rightarrow b$ in $\mcB$ induces a map
\[
\beta^*: \bigvee_{i \in \sk(\mcI \downarrow b)} \mcU(\dom(i),c)_+ \longrightarrow
\bigvee_{j \in \sk(\mcI \downarrow a)} \mcU(\dom(j),c)_+ \ .
\]
Furthermore, this assignment is functorial, so that wedge sums of
the form
\[
\bigvee_{i \in \sk(\mcI \downarrow b)} \mcU(\dom(i),c)_+
\]
give a functor $\mcB^\op \rightarrow \Sets_*$ in the $b$-variable.
\end{lemma}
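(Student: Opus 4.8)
The plan is to run the argument of Lemma~\ref{A_funct} in the opposite $\mcB$-variable. First I would describe the induced map $\beta^*$ on summands. Fix $\beta\colon a\to b$ in $\mcB$, a summand index $i\in\sk(\mcI\downarrow b)$ with $\dom(i)=b_i$, and an element $\rho$ of $\mcU(b_i,c)_+$. If $\rho$ is the basepoint, set $\beta^*(\rho)$ to be the basepoint. Otherwise $\rho\colon b_i\to c$ is a regular map, and I would form the composite $\rho\circ i^*\circ\beta\colon a\to c$ in $\mcB$ and take a three-fold factorization
\[
a \stackrel{\ell^*}{\longrightarrow} a_\ell \stackrel{\sigma}{\longrightarrow} c_\ell \stackrel{m}{\longrightarrow} c
\]
with $\ell\in\sk(\mcI\downarrow a)$, $\sigma\in\mcA$, and $m\in\mcI$, then define $\beta^*(\rho)=m\circ\sigma\in\mcU(\dom(\ell),c)$, sitting in the summand indexed by $\ell$. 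Exactly as in the remarks following Lemma~\ref{A_funct}, once the $\mcI$-components are required to lie in the fixed skeleta the index $\ell$ is determined and the regular map $m\circ\sigma$ is then uniquely determined, so $\beta^*$ is well-defined; the residual ambiguity in a three-fold factorization is absorbed by the uniqueness clauses in the axioms for conjugation.

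The cleanest way to see functoriality is to first record that assembling three-fold factorizations gives, for each fixed $b$, a bijection
\[
\bigvee_{i\in\sk(\mcI\downarrow b)} \mcU(\dom(i),c)_+ \;\xrightarrow{\ \cong\ }\; \mcB(b,c)_+
\]
sending the element $\rho$ of the $i$-summand to $\rho\circ i^*$ and the basepoint to the disjoint basepoint. This is a bijection precisely because every map $b\to c$ in $\mcB$ has a three-fold factorization that becomes unique once its $\mcI$-parts are taken in the chosen skeleton of $\mcI\downarrow b$. Under this identification the description above says exactly that $\beta^*$ corresponds to precomposition $(-)\circ\beta\colon\mcB(b,c)_+\to\mcB(a,c)_+$, which is visibly contravariantly functorial: $1_b^*=\mathrm{id}$ and $(\beta_2\circ\beta_1)^*=\beta_1^*\circ\beta_2^*$ hold on the nose, the latter being just associativity of composition in $\mcB$ together with uniqueness of three-fold factorizations. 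Functoriality of the wedge sums in the $b$-variable then reduces to naturality in $b$ of the displayed bijection, which is immediate from the way $\beta^*$ was defined.

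The only step that really needs care, and the place I expect the work to concentrate, is the bookkeeping that pins down the index $\ell$ and the regular part $m\circ\sigma$ concretely in terms of the data of $\beta$—this is what makes the naturality statement genuine rather than circular. Writing $\beta=j\circ\alpha\circ k^*$ as a three-fold factorization, one computes $\rho\circ i^*\circ\beta$ by forming the pullback in $\mcI$ of the image map $j$ along $i$, then invoking the two axioms for conjugation exactly as in the composition law of $\mcB$ (pulling $\alpha$ back across the resulting $\mcI$-map and refactoring), with Proposition~\ref{composition_relations}(a)--(b) handling the $\mcI^\op$-arithmetic; Proposition~\ref{closure} is what guarantees that singular composites stay singular, so that the basepoint is respected throughout. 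As with Lemma~\ref{A_funct}, the identity and associativity checks then reduce to a handful of cases according to which of the maps in sight are isomorphisms in $\mcI$, so the verification is routine—a diagram chase dual to the one already carried out for $\beta_*$.
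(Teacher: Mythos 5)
Your proposal is correct, and your formula for $\beta^*$ is the paper's formula in disguise: writing $\beta = j_1 \circ \beta_1 \circ i_1^*$ and $i^* \circ j_1 \circ \beta_1 = \beta_2 \circ i_2^*$ as the paper does, the three-fold factorization of $\rho \circ i^* \circ \beta$ has cokernel $(i_1 \circ i_2)^*$ and regular part $\rho \circ \beta_2$, which is exactly what the paper assigns; and you both note in passing that the output is always a regular map (never the wedge basepoint), so the basepoint convention is forced. The genuine difference lies in how functoriality is established. The paper spells out $\beta^*$ and then appeals to a ``rather formal (yet very tedious)'' direct verification, postponing the identification with $\mcB(-,c)_+$ to Proposition~\ref{reg_gen}. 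You invert this order: you first construct the set-level bijection $\bigvee_{i \in \sk(\mcI\downarrow b)}\mcU(\dom(i),c)_+ \cong \mcB(b,c)_+$ (which is precisely the non-naturality content of Proposition~\ref{reg_gen} and follows from skeletal uniqueness of three-fold factorizations alone, together with the fact that $\rho$ is recoverable from $\rho\circ i^*$ as $\rho = (\rho \circ i^*)\circ i$), and then read off that $\beta^*$ corresponds to precomposition by $\beta$ under this bijection, so contravariant functoriality in $b$ is inherited from $\mcB(-,c)_+$ for free. The reordering is not circular, since the bijection is a statement about based sets and does not presuppose that the wedge already carries a functor structure. What the paper's route buys is that Lemma~\ref{reg_funct} stands alone before Proposition~\ref{reg_gen} is even stated; what your route buys is the replacement of the case-by-case diagram chase with a one-line transport-of-structure argument, which is the cleaner way to see why the claim is true.
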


\begin{prop}\label{reg_gen}
Suppose that $(\mcB, \mcA)$ is a conjugate pair.  For each pair of objects $b$ and $c$ of $\mcB$, there is an isomorphism of based sets
\[
\mcB(b,c)_+ = \bigvee_{i \in \sk(\mcI \downarrow b)} \mcU(\dom(i),c)_+
\]
which is natural in the first variable.
\end{prop}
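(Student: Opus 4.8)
The plan is to recognize the claimed isomorphism as the bookkeeping device that records the lower half of a three-fold factorization: a morphism $\beta: b \rightarrow c$ of $\mcB$ is cut into its cokernel together with the regular map obtained by bundling the $\mcA$-part with the image. Concretely, I would first build the underlying bijection of based sets. Given $\beta: b \rightarrow c$, pick a three-fold factorization $\beta = j \circ \alpha \circ i^*$ with $i \in \sk(\mcI \downarrow b)$; such an $i$ exists and is a well-defined object of the chosen skeleton, since the cokernels of any two three-fold factorizations of $\beta$ are isomorphic in $\mcI \downarrow b$. The associated datum is $\rho = \beta \circ i = j \circ \alpha$, which lies in $\mcU(\dom(i), c)$ because $\mcI \circ \mcA$ is by definition the subcategory of regular maps; I send $\beta$ to $\rho$ in the $i$-summand, and send the adjoined basepoint of $\mcB(b,c)_+$ to the common basepoint of the wedge. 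The inverse sends a regular map $\rho: \dom(i) \rightarrow c$ to $\rho \circ i^*$.

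To see these assignments are mutually inverse I would argue as follows. If $\rho = j \circ \alpha$ is the image-then-$\mcA$ factorization of a regular map, then $\rho \circ i^* = j \circ \alpha \circ i^*$ is a three-fold factorization with cokernel $i^*$, so applying the first construction to $\beta := \rho \circ i^*$ recovers $i$; and composing $\rho_1 \circ i^* = \rho_2 \circ i^*$ on the right with $i$ and invoking $i^* \circ i = 1$ from Proposition~\ref{composition_relations}(b) gives $\rho_1 = \rho_2$. Surjectivity onto $\mcB(b,c)$ is handled by adjusting an arbitrary three-fold factorization so that its cokernel lands in the skeleton, which absorbs an isomorphism of $\mcI$ into the $\mcA$-part (using Proposition~\ref{composition_relations}(a) together with $\Iso(\mcI) \subseteq \Iso(\mcA)$). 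Finally, since each $\mcU(\dom(i),c)_+$ carries a disjoint basepoint, the wedge on the right is just the disjoint union of the sets of regular maps with a single basepoint adjoined, and the bijection above identifies it with $\mcB(b,c) \sqcup \{*\} = \mcB(b,c)_+$.

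For naturality in the first variable I would compare this bijection with the functoriality supplied by Lemma~\ref{reg_funct}. For a morphism $\beta: a \rightarrow b$ of $\mcB$, the induced map $\beta^*$ of that lemma takes a regular map $\rho$ in the $i$-summand to the regular part of a three-fold factorization of $\rho \circ i^* \circ \beta$, placed in the summand indexed by its cokernel (and to the basepoint if that composite is singular). Under the bijection, the pair $(i, \rho)$ corresponds to $\rho \circ i^* \in \mcB(b,c)$, precomposition by $\beta$ carries this to $\rho \circ i^* \circ \beta \in \mcB(a,c)$, and the bijection for $a$ reads off precisely the cokernel/regular-part pair of a three-fold factorization of $\rho \circ i^* \circ \beta$, which is $\beta^*(i, \rho)$. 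Hence the square relating $(-) \circ \beta$ on $\mcB(-,c)_+$ to $\beta^*$ commutes; this is exactly the asserted naturality, and since the statement claims nothing in the $c$-variable there is nothing further to check.

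The bijection itself is routine once the uniqueness of three-fold factorizations in a fixed skeleton and Proposition~\ref{composition_relations} are in hand; the real obstacle is the naturality square, namely verifying that the map $\beta^*$ of Lemma~\ref{reg_funct} genuinely is the transport of precomposition in $\mcB$. Pinning this down requires unwinding how composition in $\mcB$ is computed --- via a pullback in $\mcI$ followed by the two conjugation axioms, with Proposition~\ref{closure} ensuring that singular composites are sent to the basepoint --- and the only genuine care needed is keeping track of which skeletal index each factor occupies after these pullbacks are performed.
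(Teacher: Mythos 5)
Your proposal is correct and follows the same route the paper takes: construct the bijection via three-fold factorizations read against a fixed skeleton (recording the cokernel index together with the regular part $\beta \circ i$), then check naturality against the map $\beta^*$ of Lemma~\ref{reg_funct}, which by construction is exactly transported precomposition. The paper itself only sketches the definition of $\beta^*$ and declares the verification ``rather formal (yet very tedious),'' so your write-up, which also makes the bijection and its inverse explicit and invokes Proposition~\ref{composition_relations}(a)--(b) and $\Iso(\mcI)\subseteq\Iso(\mcA)$ at exactly the right spots, is if anything slightly more complete than the source.
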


We remark that in general the isomorphisms of Proposition~\ref{reg_gen} are not natural in the second variable.  Furthermore, by the
finiteness condition on the indexing category $\mcI$, the wedge sums of Propositions~\ref{reg_free} and \ref{reg_gen}
consist of only a finite number of summands.

As before, verifying the assorted claims is rather formal (yet very tedious) once the definition of the induced map
$\beta^*$ is made clear.  To that end, suppose that $\beta: a \rightarrow b$ has three-fold factorization
\[
\begindc{0}[3]
\obj(-8,8)[1]{$a$} \obj(8,8)[2]{$b$} \obj(-8,-8)[3]{$a_1$}
\obj(8,-8)[4]{$b_1$} \mor{1}{2}{$\beta$} \mor{3}{4}{$\beta_1$}
\mor{1}{3}{$i_1^*$}[\atright, \solidarrow]
\mor{4}{2}{$j_1$}[\atright, \solidarrow] 
\enddc
\]
chosen with respect to the skeleta.  We shall describe the map
\[
\beta^*: \bigvee_{i \in \sk(\mcI \downarrow b)} \mcU(\dom(i),c)_+ \longrightarrow \bigvee_{j \in \sk(\mcI \downarrow a)}
\mcU(\dom(j),c)_+
\]
on the summand corresponding to a fixed map $i: b' \rightarrow b$
in $\mcI$.  (This map will always send regular maps to regular
maps, so the basepoint is of no concern here.)  The composite
\[
a_1 \stackrel{\beta_1}{\longrightarrow} b_1
\stackrel{j_1}{\longrightarrow} b \stackrel{i^*}{\longrightarrow} b'
\]
admits a factorization as a map $i_2^*: a_1 \rightarrow a_2$ followed by a regular map $\beta_2: a_2 \rightarrow b'$ (so
$\beta_2$ is simply the last two legs of the three-fold factorization).  Given a regular map $\gamma: b' \rightarrow c$,
we define
\[
\beta^*(\gamma) = \gamma \circ \beta_2
\]
landing in the summand corresponding to $j = i_1 \circ i_2: a_2 \rightarrow a$.


\section{The Induced Adjoint Pair; Free Functors}\label{adjoint_section}

\subsection{First properties}
Thus far, from a conjugate pair $(\mcB, \mcA)$ we obtain an associated bimodule $\mcU_+: \mcA^\op \times \mcB \rightarrow
\Sets_*$.  For a fixed model category $\mcC$, this in turn gives rise to an adjoint pair
\[
\begindc{0}[3]
\obj(-12,0)[1]{$[\mcB^\op, \mcC]$}
\obj(12,0)[2]{$[\mcA^\op, \mcC]$}
\obj(-8,1)[3]{} \obj(8,1)[4]{} \obj(-8,-1)[5]{} \obj(8,-1)[6]{} \mor{3}{4}{$\mcL$} \mor{6}{5}{$\mcR$}
\enddc
\]
between model categories of functors, as described in Section~\ref{Morita_subsection}.  Before we can show that this is a
Quillen equivalence when $\mcC$ is semi-stable, we need some basic properties of this adjoint pair.

\begin{prop}\label{prod_decomp}
Suppose that $\mathcal{C}$ is a model category and that $(\mcB,
\mcA)$ is a conjugate pair of small categories.  For each $G:
\mathcal{A}^{\mathrm{op}} \rightarrow \mathcal{C}$ and $b$ in
$\mcB$, there is an isomorphism
\begin{center}
$\ds \mcR G(b) = \prod_{i \in \sk(\mcI \downarrow b)} G(\dom(i))$
\end{center}
and this is natural in both $G$ and $b$.
\end{prop}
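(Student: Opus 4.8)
The plan is to unwind the definition of $\mcR$ and reduce the claim to two purely formal properties of the $\Hom^{\mcA}(-,-)$ construction: that it carries coproducts in its first variable to products, and that it obeys the Yoneda lemma. By definition $\mcR G(b) = \Hom^{\mcA}(\mcU(-,b)_+, G)$, the end over $\mcA$ of the functor $(x,y) \mapsto \prod_{\mcU(y,b)_+} G(x)$. First I would invoke Proposition~\ref{reg_free}, which furnishes a natural isomorphism $\mcU(-,b)_+ \cong \bigvee_{i \in \sk(\mcI \downarrow b)} \mcA(-,\dom(i))_+$ of functors $\mcA^\op \to \Sets_*$; feeding this into the first slot of $\Hom^{\mcA}(-,G)$ gives $\mcR G(b) \cong \Hom^{\mcA}\bigl(\bigvee_{i \in \sk(\mcI \downarrow b)} \mcA(-,\dom(i))_+,\, G\bigr)$.

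Next, since a product indexed by a wedge of based sets is the product of the products indexed by the summands, and since ends commute with products, $\Hom^{\mcA}(-,G)$ pulls the coproduct out as a product: $\mcR G(b) \cong \prod_{i \in \sk(\mcI \downarrow b)} \Hom^{\mcA}(\mcA(-,\dom(i))_+, G)$. (Because $\mcI$ is an indexing category, $\sk(\mcI \downarrow b)$ is finite, so this is in fact a finite wedge and a finite product, though finiteness is not needed for the identity itself.) Finally, the Yoneda lemma for $\Hom^{\mcA}$ — namely $\Hom^{\mcA}(\mcA(-,a)_+, G) \cong G(a)$ — applied with $a = \dom(i)$ collapses each factor, yielding $\mcR G(b) \cong \prod_{i \in \sk(\mcI \downarrow b)} G(\dom(i))$. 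Naturality in $G$ is then immediate: the first isomorphism does not involve $G$ at all, and the coproduct-to-product and Yoneda isomorphisms are natural in $G$ by construction.

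Naturality in $b$ is the step requiring real care, and I expect it to be the main obstacle, because the index set $\sk(\mcI \downarrow b)$ of the product varies with $b$. For this I would appeal to Lemma~\ref{A_funct}, which records exactly the functoriality in $b$ of the wedge $b \mapsto \bigvee_{i \in \sk(\mcI \downarrow b)} \mcA(-,\dom(i))_+$, together with the assertion in Proposition~\ref{reg_free} that its isomorphism varies naturally with $b$. Given $\beta \colon b \to c$ in $\mcB$, applying the contravariant functor $\Hom^{\mcA}(-,G)$ to the induced structure map $\mcU(-,b)_+ \to \mcU(-,c)_+$ and transporting it through the coproduct-to-product and Yoneda isomorphisms should recover precisely the map $\prod_{j \in \sk(\mcI \downarrow c)} G(\dom(j)) \to \prod_{i \in \sk(\mcI \downarrow b)} G(\dom(i))$ that $\mcR G(\beta)$ is forced to be — on each factor, a projection onto the component determined by the three-fold factorization of $\beta$, followed by $G$ applied to the resulting $\mcA$-morphism (or the zero map, if the relevant composite is singular). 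Conceptually this is forced, since everything is a composite of natural transformations; the work is the bookkeeping needed to check that the composite isomorphism intertwines $\mcR G(\beta)$ with this explicit map on products, which amounts to chasing a moderately large diagram assembled from the constructions of Section~\ref{nat_section}. No ideas beyond those already in place are required; this is simply the one place where the argument is tedious rather than one line.
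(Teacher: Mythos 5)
Your proof is correct and follows the same route the paper takes: unwind $\mcR G(b) = \Hom^{\mcA}(\mcU(-,b)_+,G)$, substitute the decomposition from Proposition~\ref{reg_free}, and apply the coproduct-to-product and Yoneda properties of $\Hom^{\mcA}(-,G)$. The paper's own proof is just two sentences and leaves the naturality in $b$ implicit in the phrase ``formal properties,'' relying on the naturality clause already built into Proposition~\ref{reg_free}; your more explicit unpacking of that step is a faithful elaboration, not a different argument.
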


\begin{proof}
Recall that $\mcR G(b) = \Hom^\mcA(\mcU(-,b)_+,G)$.  Now use the natural isomorphism
\[
\mcU(-,b)_+ = \bigvee_{i \in \sk(\mcI \downarrow b)} \mcA(-,\dom(i))_+
\]
supplied by Proposition~\ref{reg_free} and the formal properties of $\Hom^\mcA(-,G)$ to obtain the desired
isomorphism.
\end{proof}

\begin{cor}\label{Quillen_pair}
Suppose that $\mcC$ is a model category and that $(\mcB, \mcA)$ is a conjugate pair.  Then the adjoint pair
$(\mcL, \mcR)$ associated to the regular bimodule is a Quillen pair.
\end{cor}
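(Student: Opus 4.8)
To show that $(\mcL, \mcR)$ is a Quillen pair, I would verify that the right adjoint $\mcR$ preserves fibrations and acyclic fibrations. Since fibrations and weak equivalences in functor categories $[\mcB^\op, \mcC]$ and $[\mcA^\op, \mcC]$ are defined objectwise, it suffices to check that for each object $b$ of $\mcB$, the functor $G \mapsto \mcR G(b)$ carries objectwise fibrations to fibrations and objectwise acyclic fibrations to acyclic fibrations in $\mcC$. The key tool is Proposition~\ref{prod_decomp}, which provides the natural isomorphism
\[
\mcR G(b) = \prod_{i \in \sk(\mcI \downarrow b)} G(\dom(i)).
\]
Because the indexing category $\mcI$ is an indexing category, the comma category $\mcI \downarrow b$ has a skeleton with only finitely many objects, so this is a \emph{finite} product in $\mcC$.

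First I would dispense with fibrations. Suppose $\eta: G \rightarrow H$ is a fibration in $[\mcA^\op, \mcC]$, i.e., each $\eta_{\dom(i)}: G(\dom(i)) \rightarrow H(\dom(i))$ is a fibration in $\mcC$. Under the identification of Proposition~\ref{prod_decomp}, the map $\mcR\eta(b)$ is the product $\prod_i \eta_{\dom(i)}$. Repeatedly applying Proposition~\ref{prod_fib} (which states that a product of two fibrations, formed via the projection-twisted map $fp_A \times gp_C$, is again a fibration), one sees by induction on the finitely many factors that $\mcR\eta(b)$ is a fibration in $\mcC$. Hence $\mcR\eta$ is an objectwise fibration, so $\mcR$ preserves fibrations.

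Next I would handle acyclic fibrations. If $\eta: G \rightarrow H$ is an acyclic fibration in $[\mcA^\op, \mcC]$, then each component $\eta_{\dom(i)}$ is an acyclic fibration in $\mcC$. Here I would invoke the third bullet of Convention~\ref{conv}: the product of two acyclic fibrations (again via the twisted map) is a weak equivalence. Combined with Proposition~\ref{prod_fib}, the product of two acyclic fibrations is an acyclic fibration, and by induction over the finitely many factors $\mcR\eta(b)$ is an acyclic fibration in $\mcC$. Thus $\mcR$ preserves acyclic fibrations as well, and $(\mcL, \mcR)$ is a Quillen pair.

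The only mild subtlety — and the step I would be most careful about — is the induction: one must check that, when combining more than two factors, the iterated product map genuinely has the form covered by Convention~\ref{conv} and Proposition~\ref{prod_fib}. This is routine since a finite product can be built two factors at a time and finite products of fibrations (in the ordinary sense) are fibrations, with the twisted maps $fp_A \times gp_C$ composing correctly; the finiteness of $\sk(\mcI \downarrow b)$, guaranteed by the definition of an indexing category, is exactly what makes this induction terminate. There is no homotopy-theoretic obstacle here: this corollary is purely formal given the product decomposition and the blanket assumptions, and notably it does \emph{not} require $\mcC$ to be semi-stable.
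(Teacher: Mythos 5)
Your proof is correct and follows the same route as the paper: reduce to checking that $\mcR$ preserves fibrations and acyclic fibrations objectwise, then use the finite-product formula of Proposition~\ref{prod_decomp} together with Proposition~\ref{prod_fib} and the third bullet of Convention~\ref{conv}. The paper merely states this more tersely; your spelled-out induction over the finitely many factors is exactly the intended argument.
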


\begin{proof}
It is enough to check that $\mcR$ preserves fibrations and acyclic fibrations.  This follows immediately from
Proposition~\ref{prod_fib} and our conventions regarding model categories.
\end{proof}

\begin{cor}\label{detects}
Suppose that $\mcC$ is a semi-stable model category and that
$(\mcB, \mcA)$ is a conjugate pair.  Let $\tau: F \rightarrow G$ be
a natural transformation of functors $\mcA^\op \rightarrow \mcC$
with $F$ cofibrant.  Then $\tau$ is a weak equivalence if and only
if $\mcR \tau$ is a weak equivalence.
\end{cor}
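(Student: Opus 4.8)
The plan is to run the whole argument through the objectwise product decomposition of Proposition~\ref{prod_decomp}: under the natural isomorphism $\mcR G(b) \cong \prod_{i \in \sk(\mcI \downarrow b)} G(\dom(i))$, the natural transformation $\mcR\tau$ becomes, at each object $b$ of $\mcB$, the product map $\prod_i \tau_{\dom(i)}$ of the finite family $\bigl(\tau_{\dom(i)}\bigr)_{i \in \sk(\mcI \downarrow b)}$ (the evident map $\prod_i F(\dom(i)) \to \prod_i G(\dom(i))$). Since weak equivalences in both $[\mcA^\op,\mcC]$ and $[\mcB^\op,\mcC]$ are detected objectwise, the corollary reduces to understanding when a finite product of maps of $\mcC$ is a weak equivalence.

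For the implication ($\Leftarrow$) I would argue directly. Every object $a$ of $\mcA$ is an object of $\mcB$, and $1_a$ represents the terminal object of $\mcI \downarrow a$, so $\tau_a$ occurs as one of the factors of $\mcR\tau_a$. Using the zero maps of the pointed category $\mcC$ to form a section of the shape $(1,0,\dots,0)$ and the corresponding projection, one exhibits $\tau_a$ as a retract of $\mcR\tau_a$ in the category of arrows; since weak equivalences are closed under retracts, $\tau_a$ is a weak equivalence for every $a$, and hence $\tau$ is a weak equivalence. This direction uses neither the cofibrancy of $F$ nor semi-stability.

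The substance is the implication ($\Rightarrow$). Assume $\tau$ is a weak equivalence. A cofibrant object of $[\mcA^\op,\mcC]$ is objectwise cofibrant — the generating cofibrations $i_*\colon F_d^B \to F_d^C$ are objectwise coproducts of copies of $i$, and objectwise cofibrancy survives the transfinite pushouts and retracts out of which an arbitrary cofibration is built — so each $\tau_{\dom(i)}\colon F(\dom(i)) \to G(\dom(i))$ is a weak equivalence with cofibrant source. Hence it suffices to show: \emph{in a semi-stable model category, a finite product of weak equivalences with cofibrant sources is a weak equivalence.} For this I would factor each constituent map $f_k\colon A_k \to B_k$ ($k=1,\dots,n$) as an acyclic cofibration $c_k\colon A_k \hookrightarrow C_k$ followed by an acyclic fibration $q_k\colon C_k \to B_k$, so that $C_k$ is again cofibrant. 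A direct induction from Proposition~\ref{prod_fib} and the clause of Convention~\ref{conv} on products of acyclic fibrations shows that $\prod_k q_k$ is an acyclic fibration, so it remains to treat $\prod_k c_k$. For this I would compare the commuting square whose top edge is the wedge $\bigvee_k c_k$, whose bottom edge is $\prod_k c_k$, and whose two vertical edges are the canonical maps $\bigvee_k A_k \to \prod_k A_k$ and $\bigvee_k C_k \to \prod_k C_k$. Its top edge is a weak equivalence by iterating the clause of Convention~\ref{conv} on wedges of acyclic cofibrations (a finite coproduct of cofibrations being a cofibration), and each vertical edge is a weak equivalence because for cofibrant objects the canonical map $\bigvee \to \prod$ is a weak equivalence — an induction on the number of factors from the Lower Triangular Axiom applied to the identity matrix (equivalently Proposition~\ref{prod_weq}), using that finite coproducts of cofibrant objects are cofibrant. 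Two-out-of-three then forces $\prod_k c_k$, and therefore $\prod_k f_k$, to be a weak equivalence. Applied with the $f_k$ taken to be the maps $\tau_{\dom(i)}$, this shows every $\mcR\tau_b$ is a weak equivalence, so $\mcR\tau$ is one.

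The main obstacle is exactly the step just sketched: a finite product of cofibrant objects need not be cofibrant, even in a semi-stable model category, so one cannot simply iterate Proposition~\ref{prod_weq} two factors at a time. The way around this is to route the argument through the coproduct — where cofibrancy \emph{is} preserved and the convention on acyclic cofibrations applies without any cofibrancy hypothesis — and then to return along the semi-stable comparison map $\bigvee \to \prod$. Everything else (the passage to the decomposition, the objectwise detection of weak equivalences, and the retract argument for ($\Leftarrow$)) is routine.
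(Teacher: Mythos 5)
Your proof is correct and takes the same route as the paper: reduce to the objectwise finite product decomposition of Proposition~\ref{prod_decomp} and apply Proposition~\ref{prod_weq}, which is exactly the paper's one-line citation. You have, however, rightly noticed and repaired a subtlety that the one-liner glosses over---a naive pairwise iteration of Proposition~\ref{prod_weq} stalls because a finite product of cofibrant objects need not be cofibrant---and your detour through the coproduct (where cofibrancy is preserved and the acyclic-cofibration clause of Convention~\ref{conv} applies) and back along the semi-stable comparison map $\bigvee \to \prod$ is precisely the right way to obtain the needed $n$-fold version.
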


\begin{proof}
This follows directly from Proposition~\ref{prod_weq}.
\end{proof}

Corollary~\ref{detects} is one of two major ingredients in the proof of our main theorem.  The missing ingredient is
Proposition~\ref{cofib_equiv}, which states that the unit map $\eta_F: F \rightarrow \mcR(\mcL F)$ is a weak
equivalence whenever $F$ is cofibrant.  In the next section we build the necessary machinery to complete this final step.


\subsection{Free functors and pushouts}\label{free_section}

Our ultimate goal is to establish that the unit map $\eta_F: F \rightarrow \mcR(\mcL F)$ is a weak
equivalence whenever $F$ is cofibrant.  We prove this by an induction argument using the cofibrant generation
hypothesis.  As a first step we examine the free functors.  Throughout this section, $\mcC$ denotes a semi-stable model category.

\begin{lemma}\label{free_decomp}
Suppose that $F: \mcB^\op \rightarrow \mcC$ is the free functor
$F_b^C$.  Then $\mcL F: \mcA^\op \rightarrow \mcC$ is given by
$\mcL F(-) = C \otimes \mcU(-,b)_+$.
\end{lemma}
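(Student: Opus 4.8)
The plan is to unwind both definitions and use the Yoneda-type property of the tensor product of functors. Recall from Section~\ref{Morita_subsection} that $\mcL F(a) = F \otimes_\mcB P(a,-)$ where here $P = \mcU_+$ is the regular bimodule, and that $F = F_b^C$ is defined by $F_b^C(x) = C \otimes \mcB(x,b)_+$. So the computation reduces to evaluating the coend of the functor $\mcB^\op \times \mcB \rightarrow \mcC$ sending $(x,y) \mapsto F_b^C(x) \otimes \mcU(a,y)_+$, which is $(C \otimes \mcB(x,b)_+) \otimes \mcU(a,y)_+$.

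First I would record the elementary fact that for a based set $S$ and any object $C$, the free functor $F_b^C$ is the ``tensor with $C$'' of the representable based functor $\mcB(-,b)_+$, i.e.\ $F_b^C = C \otimes \mcB(-,b)_+$, and that the tensor $- \otimes S$ (being a coproduct construction) commutes with coends and with the representable-absorption property. Then the key step is the co-Yoneda lemma for based functors: for any functor $Q: \mcB \rightarrow \Sets_*$, the coend $\int^{y} \mcB(-,b)_+ \otimes Q(y)$, suitably interpreted, recovers $Q(b)$ — more precisely, tensoring the representable $\mcB(x,b)_+$ against $Q$ over $\mcB$ yields $Q(b)$. Applying this with $Q(-) = \mcU(a,-)_+$ gives $\mcB(-,b)_+ \otimes_\mcB \mcU(a,-)_+ \cong \mcU(a,b)_+$ in $\Sets_*$. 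Pulling the constant object $C$ through the coend then yields $\mcL F(a) = C \otimes \mcU(a,b)_+$, and naturality in $a$ is automatic since every map in the argument is natural.

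I would then remark that the naturality-in-$b$ statement (that the right-hand side really is a functor $\mcA^\op \to \mcC$) is exactly the content packaged by the regular bimodule: $\mcU_+$ is a functor $\mcA^\op \times \mcB \to \Sets_*$, so $\mcU(-,b)_+$ is a functor $\mcA^\op \to \Sets_*$ and $C \otimes \mcU(-,b)_+$ is therefore a functor $\mcA^\op \to \mcC$, with no further verification needed.

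The main obstacle, such as it is, is purely bookkeeping: one must be careful that all the ``$\otimes$'' symbols — the tensor of an object of $\mcC$ with a based set, and the coend tensor product of functors — are handled with the correct basepoint conventions from Section~\ref{Morita_subsection} (the basepoint factor is the zero object, so wedges over based sets behave correctly under the coend). There is no real difficulty beyond checking that the coend/co-Yoneda identification is compatible with these conventions, which the paper has already asserted ``has all of the expected properties of a tensor product.'' So the proof is essentially a one-line invocation of the co-Yoneda lemma once $F_b^C$ is rewritten as $C \otimes \mcB(-,b)_+$.
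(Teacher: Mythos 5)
Your proposal is correct and matches the paper's proof exactly: rewrite $F_b^C$ as $C \otimes \mcB(-,b)_+$, pull $C$ through the coend by associativity, and apply the (co-)Yoneda lemma to collapse $\mcB(-,b)_+ \otimes_\mcB \mcU(a,-)_+$ to $\mcU(a,b)_+$. The paper presents this as a four-line chain of isomorphisms and labels it ``associativity of our various tensor products, together with the Yoneda lemma,'' which is precisely your argument.
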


\begin{proof}
This is just the associativity of our various tensor products,
together with the Yoneda lemma.  By hypothesis, $F(x) = C \otimes
\mcB(x,b)_+$ so that
\begin{eqnarray*}
\mcL F(a) & = & F \otimes_\mcB \mcU(a,-)_+\\
 & = & (C \otimes \mcB(-,b)_+) \otimes_\mcB \mcU(a,-)_+\\
 & = & C \otimes (\mcB(-,b)_+ \otimes_\mcB \mcU(a,-)_+)\\
 & = & C \otimes \mcU(a,b)_+.
\end{eqnarray*}
\end{proof}

\begin{prop}\label{free_equiv}
Suppose that $F: \mcB^\op \rightarrow \mcC$ is a free functor of
the form $F_b^C$ with $C$ a cofibrant object of $\mcC$.  Then the
unit map $\eta_F: F \rightarrow \mcR(\mcL F)$ is a weak
equivalence.
\end{prop}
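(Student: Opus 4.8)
The plan is to evaluate $\eta_F$ objectwise and, at each object $c$ of $\mcB$, to recognize $\eta_{F,c}$ as a ``lower triangular'' map from a finite coproduct to the corresponding product, indexed by the poset $\sk(\mcI\downarrow c)$, and then to reduce to the Lower Triangular Axiom. Fix $c$. Since $F=F_b^C$, Lemma~\ref{free_decomp} gives $\mcL F(a)=C\otimes\mcU(a,b)_+$ for every $a$; as $C$ is cofibrant and coproducts of cofibrant objects are cofibrant, each $\mcL F(a)$ is cofibrant. Proposition~\ref{prod_decomp} then gives
\[
\mcR(\mcL F)(c)=\prod_{i\in\sk(\mcI\downarrow c)} C\otimes\mcU(\dom(i),b)_+ ,
\]
while $F(c)=C\otimes\mcB(c,b)_+$, so that Proposition~\ref{reg_gen}, together with the fact that $C\otimes(-)$ carries a wedge of based sets to the wedge of the corresponding pieces, gives
\[
F(c)=\bigvee_{i\in\sk(\mcI\downarrow c)} C\otimes\mcU(\dom(i),b)_+ .
\]
Writing $A_i=C\otimes\mcU(\dom(i),b)_+$ and $S=\sk(\mcI\downarrow c)$, the map $\eta_{F,c}$ becomes a morphism $\bigvee_{i\in S}A_i\to\prod_{i\in S}A_i$; here $S$ is finite by the indexing hypothesis, is a finite poset by Proposition~\ref{poset}, and each $A_i$ is cofibrant.

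The crucial step is to pin down the components of $\eta_{F,c}$. Unwinding the unit of the Morita adjunction, the component $A_i\to A_{i'}$ of $\eta_{F,c}$ (from the $i$-summand to the $i'$-factor, obtained by composing with the canonical inclusion and projection) is induced by the based map $\mcU(\dom(i),b)_+\to\mcU(\dom(i'),b)_+$ sending a regular map $\gamma\colon\dom(i)\to b$ to the class of the composite $\gamma\circ i^*\circ i'$ in $\mcU(\dom(i'),b)_+$. If $i'=i$ then $i^*\circ i=1$ by Proposition~\ref{composition_relations}(b), so this component is the identity. If $i'\neq i$, one forms the pullback of $i$ and $i'$ over $c$ inside $\mcI$; using the composition law in $\mcB$ (which is defined through such pullbacks) and Proposition~\ref{composition_relations}, the cokernel of $\gamma\circ i^*\circ i'$ is identified with the formal opposite of the leg of that pullback lying over $\dom(i')$, and this leg is an isomorphism of the EI-category $\mcI$ exactly when $i'$ factors through $i$, that is, exactly when $i'\le i$ in $S$. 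When $i'\not\le i$ the composite is therefore singular and the component $A_i\to A_{i'}$ is the zero map. Thus $\eta_{F,c}$ has identity diagonal components and vanishing $(i',i)$-component whenever $i'\not\le i$. This identification, carried out with the explicit formulas of Section~\ref{nat_section}, is the tedious heart of the argument.

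To conclude I would prove, by induction on $|S|$, the following: for a finite poset $S$ and cofibrant objects $\{A_k\}_{k\in S}$ of the semi-stable category $\mcC$, any map $f\colon\bigvee_{k\in S}A_k\to\prod_{k\in S}A_k$ whose diagonal components are weak equivalences and whose component out of the $k$-summand into the $k'$-factor vanishes whenever $k'\not\le k$ is a weak equivalence. For the inductive step, choose a minimal element $k_0$ of $S$; minimality forces the component out of the $k_0$-summand into the $k'$-factor to vanish for every $k'\neq k_0$. Let $\bar g\colon\bigvee_{k\neq k_0}A_k\to\prod_{k\neq k_0}A_k$ be the map with the corresponding components; it satisfies the hypotheses on the sub-poset $S\setminus\{k_0\}$, hence is a weak equivalence by induction. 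Then factor $f$ as
\[
\bigvee_{k\in S}A_k \;\xrightarrow{\,h_1\,}\; A_{k_0}\times\Bigl(\bigvee_{k\neq k_0}A_k\Bigr) \;\xrightarrow{\,\mathrm{id}\times\bar g\,}\; \prod_{k\in S}A_k ,
\]
where $h_1$ has $(k_0,k_0)$-component $f_{k_0 k_0}$, has zero component out of the $k_0$-summand into the $\bigvee_{k\neq k_0}A_k$-factor, and has the identity as its component into that factor out of the $\bigvee_{k\neq k_0}A_k$-summand; one checks directly that this composite recovers $f$, using again that the $(k',k_0)$-components vanish. Since $A_{k_0}$ and $\bigvee_{k\neq k_0}A_k$ are cofibrant, the Lower Triangular Axiom shows $h_1$ is a weak equivalence, and Proposition~\ref{prod_weq} shows $\mathrm{id}\times\bar g$ is a weak equivalence; hence so is $f$. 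Applying this to $\eta_{F,c}$ for every $c$, and using that weak equivalences of diagrams are detected objectwise, shows that $\eta_F$ is a weak equivalence. The main obstacle is the component computation of the second paragraph; the rest is formal bookkeeping together with a short induction.
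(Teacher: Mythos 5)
Your proposal is essentially the paper's argument: evaluate $\eta_F$ objectwise, identify the result as a map from the finite coproduct $\bigvee_{i}\mcL F(\dom(i))$ to the product $\prod_{i}\mcL F(\dom(i))$ indexed by $\sk(\mcI\downarrow c)$, compute that the $(i,i')$-component is induced by $\gamma\mapsto\gamma\circ i^*\circ i'$, deduce identity diagonal and vanishing for $i'\not\le i$, and invoke the Lower Triangular Axiom. Your vanishing analysis via a single pullback (with cokernel the formal opposite of the leg over $\dom(i')$) is a slightly slicker packaging of the paper's two-case split ($i<j$ versus incomparable), and your explicit induction on a minimal element makes precise the passage from the two-object axiom to the $n$-object situation, which the paper handles by refining the poset to a linear order and then simply asserting that ``the Lower Triangular Axiom shows'' the result.

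There is, however, a genuine slip at the very end. You claim the factorization $(\mathrm{id}\times\bar g)\circ h_1$ recovers $f$ ``using again that the $(k',k_0)$-components vanish.'' That is false when $k_0$ is a minimal element of $S$: the component out of the $k'$-summand into the $k_0$-factor vanishes precisely when $k_0\not\le k'$, and minimality of $k_0$ does not exclude $k_0<k'$. Concretely, if $k_0=i\circ m$ factors through $k'=i$, then $i^*\circ k_0=m\in\mcI$ is regular, so $\gamma\mapsto\gamma\circ m$ is not the zero map; these entries are the nonzero sub-diagonal part of the lower-triangular matrix and cannot be wished away. The fix is small and does not change the structure of your argument: do not leave the $(2,1)$-component of $h_1$ unspecified or claim it vanishes; instead \emph{define} it to be the tuple $(f_{k,k_0})_{k\ne k_0}$. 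With that choice the composite $(\mathrm{id}\times\bar g)\circ h_1$ literally equals $f$, and $h_1$ still satisfies the hypotheses of the Lower Triangular Axiom, since that axiom imposes no condition on the $(2,1)$-entry. Everything else in your induction and the application of Proposition~\ref{prod_weq} then goes through as you wrote it.
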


\begin{proof}
We must show that for each object $a$, the natural map $\eta_F:
F(a) \rightarrow \mcR(\mcL F)(a)$ is a weak equivalence. Recall
that Proposition~\ref{reg_gen} supplies the isomorphism
\[
\mcB(a,b)_+ = \bigvee_{i \in \sk(\mcI \downarrow a)} \mcU(\dom(i),b)_+
\]
which is natural in the first variable.  Using this in conjunction
with Lemma~\ref{free_decomp} we obtain natural isomorphisms
\begin{eqnarray*}
F(a) & = & C \otimes \mcB(a,b)_+\\
 & = & C \otimes \bigvee_{i \in \sk(\mcI \downarrow a)} \mcU(\dom(i),b)_+\\
 & = & \bigvee_{i \in \sk(\mcI \downarrow a)} C \otimes \mcU(\dom(i),b)_+\\
 & = & \bigvee_{i \in \sk(\mcI \downarrow a)} \mcL F(\dom(i)).
\end{eqnarray*}

Now consider the composite
\[\label{not_eta}
\bigvee_{i \in \sk(\mcI \downarrow a)} \mcL F(\dom(i)) \simeq F(a) \stackrel{\eta_F}{\longrightarrow} \mcR(\mcL F)(a)
\simeq \prod_{j \in \sk(\mcI \downarrow a)} \mcL F(\dom(j)). \tag{$\ast$}
\]
Fix maps $i: x \rightarrow a$ and $j: y \rightarrow a$ in the indicated skeleton.
Then $i$ determines the inclusion of the summand $\mcL F(x) = C
\otimes \mcU(x,b)_+$ into the above coproduct, while $j$
determines the projection onto the factor $\mcL F(y) = C \otimes
\mcU(y,b)_+$ out of the product.  Let $f_{ij}$ denote the
corresponding composite through the map ($\ast$) above.  Recall that the
index set $\sk(\mcI \downarrow a)$ is a finite poset.

We claim that $f_{ii}$ is the identity and that $f_{ij} = 0$ when either $i < j$ or when $i$ and $j$ are
incomparable.  Writing $F(a) = C \otimes \mcB(a,b)_+$, one checks that $f_{ij}$ is induced by the composite
\[
\mcU(x,b)_+ \rightarrow \mcB(a,b)_+ \rightarrow \mcU(y,b)_+
\]
which sends a regular map $\gamma: x \rightarrow b$ to the
composite $\gamma \circ i^* \circ j$.  Note that when $i=j$, we
have $i^* \circ j = i^* \circ i=1$, hence $f_{ii}$ is the
identity.  To prove that $f_{ij}$ is the zero map in the other cases it suffices by
Proposition~\ref{closure} to show that $i^* \circ j$ is singular.

In the case that $i < j$ there is a diagram
\[
\begindc{0}[3]
\obj(-10,8)[x]{$x$} \obj(10,8)[y]{$y$} \obj(0,-8)[a]{$a$}
\mor{x}{y}{$k$} \mor{x}{a}{$i$}[\atright, \solidarrow]
\mor{y}{a}{$j$}
\enddc
\]
in $\mcI$ where $k$ is not an isomorphism, and from $i = j \circ k$ we
obtain $i^* \circ j = k^*$.  As $k$ is not an isomorphism, $i^* \circ j$ is singular, hence $f_{ij}$ is the zero map.

The last case we must consider is when $i$ and $j$ are not
comparable.  If the pullback of $i$ and $j$ is
\[
\begindc{0}[3]
\obj(-8,8)[z]{$z$} \obj(8,8)[y]{$y$} \obj(-8,-8)[x]{$x$}
\obj(8,-8)[a]{$a$} \mor{x}{a}{$i$} \mor{z}{y}{$k$} \mor{z}{x}{$l$}[\atright,
\solidarrow] \mor{y}{a}{$j$}
\enddc
\]
then we have $l \circ k^* = i^* \circ j$.  If $k$ is an isomorphism
we obtain $j \leq i$, a contradiction.  Hence $i^* \circ j$ is
singular, and $f_{ij}$ is again the zero map.

We are almost in a position to apply the Lower Triangular Axiom.  We need only turn the finite partially ordered
set $\sk(\mcI \downarrow a)$ into a linearly ordered set, consistent
with the original partial ordering (finiteness is crucial here).
This is achieved by inserting relations $<$ between the
incomparable maps; see, for instance, Theorem~4.5.2 of \cite{Bru}
for details. There is some choice here of course, but this does
not matter.

In such a linear ordering, $i < j$ means one of two things:
either $i < j$ in the original poset, or $i$ and $j$ were not
comparable in the original partial ordering.  In either event,
$f_{ij}$ is the zero map by our previous remarks.  Hence under this linear ordering our map
($\ast$) is lower triangular.  Since $C$ is cofibrant, the Lower Triangular Axiom shows that
($\ast$) is a weak equivalence.  Therefore $\eta_F$ is a weak equivalence, as desired.
\end{proof}

\begin{prop}\label{pushout_equiv}
Suppose that $F_1, F_2: \mcB^\op \rightarrow \mcC$ are free
functors and we have a pushout square
\[
\begindc{0}[3]
\obj(-9,8)[1]{$F_1$} \obj(9,8)[2]{$F_2$} \obj(-9,-8)[3]{$X$}
\obj(9,-8)[4]{$Y$} \mor{1}{2}{} \mor{3}{4}{} \mor{1}{3}{}
\mor{2}{4}{}
\enddc
\]
in which
\begin{enumerate}
\item[(a)]  the map $F_1 \rightarrow F_2$ is a generating cofibration in the category of functors, and
\item[(b)]  the natural map $\eta_X: X \rightarrow \mcR(\mcL X)$ is a weak equivalence.
\end{enumerate}
Then the map $\eta_Y: Y \rightarrow \mcR(\mcL Y)$ is a weak
equivalence.
\end{prop}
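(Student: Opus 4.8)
The plan is to transport the pushout square first through $\mcL$, which preserves colimits, and then through the objectwise product decomposition of $\mcR$ supplied by Proposition~\ref{prod_decomp}, recognizing the result as a homotopy pushout by a finite iteration of Pushout-Product Coherence; a gluing lemma then finishes the argument. All of the homotopy-theoretic bookkeeping is carried out objectwise.

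First I would note that the given square is an objectwise homotopy pushout. At each object $x$ of $\mcB$ the map $F_1(x) \to F_2(x)$ is a coproduct of a generating cofibration of $\mcC$, hence a cofibration, so evaluating the square at $x$ produces a pushout square in $\mcC$ with one leg a cofibration, which is a homotopy pushout by left properness. Applying $\mcL$ gives a pushout square with vertices $\mcL F_1, \mcL F_2, \mcL X, \mcL Y$ (since $\mcL$ is a left adjoint), and since $(\mcL, \mcR)$ is a Quillen pair by Corollary~\ref{Quillen_pair} the leg $\mcL F_1 \to \mcL F_2$ is again a cofibration; hence this square too is an objectwise homotopy pushout.

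Next I would apply $\mcR$. For a fixed object $b$, Proposition~\ref{prod_decomp} gives $\mcR(\mcL Y)(b) = \prod_{i \in \sk(\mcI \downarrow b)} \mcL Y(\dom(i))$, a \emph{finite} product by the finiteness condition built into indexing categories, and similarly with $\mcL F_1$, $\mcL F_2$, $\mcL X$ in place of $\mcL Y$. For each index $i$ the object $\mcL Y(\dom(i))$ is the pushout in $\mcC$ of the span $\mcL F_2(\dom(i)) \leftarrow \mcL F_1(\dom(i)) \to \mcL X(\dom(i))$ along a cofibration, hence a homotopy pushout in $\mcC$. Iterating Pushout-Product Coherence over the finitely many $i$ shows that the product of these squares is again a homotopy pushout; that is, $\mcR(\mcL Y)(b)$ is the homotopy pushout of $\mcR(\mcL F_2)(b) \leftarrow \mcR(\mcL F_1)(b) \to \mcR(\mcL X)(b)$. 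As $b$ is arbitrary, the square with vertices $\mcR(\mcL F_1), \mcR(\mcL F_2), \mcR(\mcL X), \mcR(\mcL Y)$ is an objectwise homotopy pushout. Finally, the unit $\eta$ is natural, so it gives a map of squares from $(F_1, F_2, X, Y)$ to $(\mcR\mcL F_1, \mcR\mcL F_2, \mcR\mcL X, \mcR\mcL Y)$; on the span corners, $\eta_{F_1}$ and $\eta_{F_2}$ are weak equivalences by Proposition~\ref{free_equiv} and $\eta_X$ is one by hypothesis (b). Applying the gluing lemma for homotopy pushouts in $\mcC$ (\cite[Chapter 13]{PH}) objectwise, we conclude that $\eta_Y$ is an objectwise weak equivalence, hence a weak equivalence of diagrams.

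The main obstacle---and the step where semi-stability is actually consumed---is the passage of the homotopy pushout through $\mcR$: since $\mcR$ is a right adjoint it does not transport the colimit directly, and the only route available is the finite product formula of Proposition~\ref{prod_decomp} combined with a finite iteration of Pushout-Product Coherence, so one must take care that $\sk(\mcI \downarrow b)$ is finite before iterating the axiom. A secondary point worth verifying is that Proposition~\ref{free_equiv} genuinely applies to the free functors $F_1$ and $F_2$, i.e.\ that the domain and codomain of the generating cofibration are cofibrant, so that $\eta_{F_1}$ and $\eta_{F_2}$ are weak equivalences.
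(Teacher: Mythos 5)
Your proposal is correct and follows essentially the same route as the paper: recognize the original square as a homotopy pushout (cofibration between cofibrant objects), push it through the left Quillen functor $\mcL$, then use the finite product decomposition of $\mcR$ from Proposition~\ref{prod_decomp} together with Pushout-Product Coherence to see the $\mcR\mcL$ square is again a homotopy pushout, and finish with the gluing lemma (Proposition~13.5.10 of \cite{PH}). The observations you flag at the end---that the finiteness of $\sk(\mcI\downarrow b)$ is what permits iterating the axiom, and that the generating cofibrations must have cofibrant (co)domains for Proposition~\ref{free_equiv} to apply---are exactly the implicit hypotheses the paper is relying on.
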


\begin{proof}
As $F_1 \rightarrow F_2$ is a cofibration between cofibrant objects, the given pushout square is also a homotopy pushout.
Since $\mcL$ is left Quillen, the same is true of
\[
\begindc{0}[3]
\obj(-9,8)[1]{$\mcL F_1$} \obj(9,8)[2]{$\mcL F_2$} \obj(-9,-8)[3]{$\mcL X$}
\obj(9,-8)[4]{$\mcL Y.$} \mor{1}{2}{} \mor{3}{4}{} \mor{1}{3}{}
\mor{2}{4}{}
\enddc
\]
Applying $\mcR$ to this yields an (objectwise) finite product of homotopy pushouts, so by
Pushout-Product Coherence the square
\[
\begindc{0}[3]
\obj(-11,8)[1]{$\mcR(\mcL F_1)$} \obj(11,8)[2]{$\mcR(\mcL F_2)$} \obj(-11,-8)[3]{$\mcR(\mcL X)$}
\obj(11,-8)[4]{$\mcR(\mcL Y)$} \mor{1}{2}{} \mor{3}{4}{} \mor{1}{3}{}
\mor{2}{4}{}
\enddc
\]
is also a homotopy pushout.  We now have a map
\[
\begindc{0}[3]
\obj(0,0)[1]{$F_1$} \obj(20,0)[2]{$F_2$} \obj(0,-20)[3]{$X$}
\obj(20,-20)[4]{$Y$} \obj(10,10)[5]{$\mcR(\mcL F_1)$}
\obj(30,10)[6]{$\mcR(\mcL F_2)$}
\obj(10,-10)[7]{$\mcR(\mcL X)$}
\obj(30,-10)[8]{$\mcR(\mcL Y)$} \mor{1}{2}{}
\mor{3}{4}{} \mor{1}{3}{} \mor{2}{4}{} \mor{5}{6}{} \mor{7}{8}{}
\mor{5}{7}{} \mor{6}{8}{} \mor{1}{5}{} \mor{2}{6}{} \mor{3}{7}{}
\mor{4}{8}{}
\enddc
\]
of homotopy pushout squares in which the maps $F_i \rightarrow \mcR(\mcL F_i)$ and $X \rightarrow \mcR(\mcL X)$ are
weak equivalences.  It now follows (say, from Proposition~13.5.10 of \cite{PH}) that the map $Y \rightarrow \mcR(\mcL Y)$
is also a weak equivalence.
\end{proof}


\section{The Main Theorem}\label{main_thm_section}

We are now in a position to prove the final missing ingredient.  Again, $\mcC$ denotes a semi-stable model category throughout.

\begin{prop}\label{cofib_equiv}
For an arbitrary cofibrant functor $F: \mcB^\op \rightarrow \mcC$, the unit map $\eta_F: F \rightarrow \mcR(\mcL F)$ is
a weak equivalence.
\end{prop}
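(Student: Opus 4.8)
The plan is to run an induction on the cellular filtration of $F$, using the cofibrant generation of $[\mcB^\op, \mcC]$ together with the three preliminary results just established. Since $F$ is cofibrant, it is a retract of a functor built from the initial (zero) functor by a transfinite composition of pushouts along generating cofibrations $i_*: F_b^B \to F_b^C$ with $i$ a generating cofibration of $\mcC$. Because the property ``$\eta_{(-)}$ is a weak equivalence'' passes to retracts (the unit is natural, so a retract of $F$ induces a compatible retract of $\mcR\mcL F$, and weak equivalences are closed under retracts), it suffices to prove the claim for the functor at the top of the tower. We therefore fix a $\lambda$-sequence $X: \lambda \to [\mcB^\op, \mcC]$ with $X_0 = 0$, each $X_\xi \to X_{\xi+1}$ a pushout of a coproduct of generating cofibrations, colimits taken at limit ordinals, and $F = \colim_\xi X_\xi$; we show $\eta_{X_\xi}$ is a weak equivalence for all $\xi$ by transfinite induction.

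The base case $\xi = 0$ is immediate since $\mcL$ and $\mcR$ preserve the zero object. For the successor step, $X_{\xi+1}$ is obtained from $X_\xi$ by a pushout along a coproduct $\bigvee F_1^{(k)} \to \bigvee F_2^{(k)}$ of generating cofibrations; I would first note that a coproduct of generating cofibrations is still a cofibration between free (cofibrant) functors whose unit maps are equivalences by Proposition~\ref{free_equiv} (a coproduct of free functors $F_b^C$ is again free-ish, and in any case $\mcL$, $\mcR$, and the relevant wedge/product decompositions all commute with the finite bookkeeping, so Proposition~\ref{free_equiv} and the Lower Triangular argument apply verbatim — alternatively one iterates Proposition~\ref{pushout_equiv} one generating cofibration at a time). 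Then Proposition~\ref{pushout_equiv}, applied with $X = X_\xi$ (whose unit is a weak equivalence by the inductive hypothesis), gives that $\eta_{X_{\xi+1}}$ is a weak equivalence.

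The limit step is where the real work lies, and I expect it to be the main obstacle. At a limit ordinal $\xi$ we have $X_\xi = \colim_{\mu < \xi} X_\mu$, and I need $\eta_{X_\xi}: X_\xi \to \mcR(\mcL X_\xi)$ to be a weak equivalence knowing each $\eta_{X_\mu}$ is. The left side is fine: the $\lambda$-sequence $(X_\mu)$ is cofibrant (objectwise cofibrant with cofibrations as transition maps), so by Proposition~\ref{hocolim} its colimit computes the homotopy colimit, and $\mcL$, being left Quillen, commutes with it up to weak equivalence, giving $\mcL X_\xi \simeq \hocolim \mcL X_\mu$. For the right side I invoke Proposition~\ref{prod_decomp}: $\mcR G(b) = \prod_{i \in \sk(\mcI \downarrow b)} G(\dom(i))$ is a \emph{finite} product (finiteness of the indexing category is essential), so evaluated at $b$,
\[
\mcR(\mcL X_\xi)(b) = \prod_{i} \mcL X_\xi(\dom(i)) \simeq \prod_i \hocolim_\mu \mcL X_\mu(\dom(i)),
\]
and Colimit-Product Coherence identifies this with $\hocolim_\mu \prod_i \mcL X_\mu(\dom(i)) = \hocolim_\mu \mcR(\mcL X_\mu)(b)$. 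One must check that the $\lambda$-sequences $(\mcL X_\mu)$ and hence $(\mcR\mcL X_\mu)$ are cofibrant so that these homotopy colimits are colimits (replacing by cofibrant sequences if needed, using Hirschhorn's Proposition~17.9.1 as in Section~\ref{mod_cat_subsection}), and that the maps $\eta_{X_\mu}$ assemble into a weak equivalence of $\lambda$-sequences $(X_\mu) \to (\mcR\mcL X_\mu)$ — naturality of $\eta$ gives the underlying natural transformation, and it is a levelwise weak equivalence by the inductive hypothesis. Passing to colimits, which preserves weak equivalences of cofibrant $\lambda$-sequences, then yields that $\eta_{X_\xi}$ is a weak equivalence, completing the induction and hence the proof.
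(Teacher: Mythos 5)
Your argument is essentially the paper's: reduce to a cell complex, run transfinite induction along the filtration, use Proposition~\ref{pushout_equiv} at successor stages and the homotopy colimit machinery with Colimit-Product Coherence at limit stages. (The paper packages the final limit step as a separate comparison of three maps in the homotopy category, but this is the same argument.) One slip worth fixing: you assert that the sequence $(\mcR\mcL X_\mu)$ is cofibrant, but $\mcR$ is a right adjoint and has no reason to preserve cofibrant $\lambda$-sequences. This is not needed: the correct chain is that $(\mcL X_\mu)$ is a cofibrant sequence (so $\hocolim \simeq \colim$ there and Colimit-Product Coherence applies to the factors $\mcL X_\mu(\dom(i))$), that $\hocolim$ as a functor $[\lambda,\mcC]\to\ho(\mcC)$ sends levelwise weak equivalences of \emph{arbitrary} sequences to isomorphisms, and that $(X_\mu)$ is cofibrant so $\hocolim X_\mu \cong X_\xi$. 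You should also note that the coproduct-of-cells complication you raise does not arise under the paper's convention, where each attachment is along a single generating cofibration; if you do allow coproducts, the clean fix is the one you mention in passing, namely refining the sequence so that one cell is attached at a time.
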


\begin{proof}
If $F$ is cofibrant, it is either a cell complex or a retract thereof. The retract case follows easily
from the cell complex case, so we assume that $F$ is a cell complex.  This means that $0 \rightarrow F$ is a relative
cell complex, so that there is an ordinal $\lambda$ and a transfinite diagram $X: \lambda \rightarrow [\mcB^\op, \mcC]$
such that
\begin{enumerate}
\item[(i)]  $X_0 = 0$,
\item[(ii)]  $\ds \colim_{\alpha} X_\alpha = F$,
\item[(iii)] $0 \rightarrow F$ is the composition of $X$, and
\item[(iv)] for each ordinal $\alpha < \lambda$, the map $X_\alpha \rightarrow X_{\alpha + 1}$ fits into a pushout square
\[
\begindc{0}[3]
\obj(-9,8)[1]{$F_1$} \obj(9,8)[2]{$F_2$} \obj(-9,-8)[3]{$X_\alpha$}
\obj(9,-8)[4]{$X_{\alpha + 1}$} \mor{1}{2}{$i$} \mor{3}{4}{}
\mor{1}{3}{} \mor{2}{4}{}
\enddc
\]
where $F_1$ and $F_2$ are free functors and $i: F_1 \rightarrow F_2$ is a generating cofibration in the category of diagrams.
\end{enumerate}

For each ordinal $\alpha < \lambda$ we have a commutative square
\[
\begindc{0}[3]
\obj(-11,8)[1]{$X_\alpha$} \obj(11,8)[2]{$\mcR(\mcL X_\alpha)$} \obj(-11,-8)[3]{$F$}
\obj(11,-8)[4]{$\mcR(\mcL F)$} \mor{1}{2}{} \mor{3}{4}{$\eta_F$}
\mor{1}{3}{} \mor{2}{4}{}
\obj(17,-9){.}
\enddc
\]
Taking homotopy colimits gives a commutative square
\[
\begindc{0}[3]
\obj(-18,9)[1]{$\ds \hocolim_\alpha X_\alpha$} \obj(18,9)[2]{$\ds \hocolim_\alpha \mcR(\mcL X_\alpha)$} \obj(-18,-9)[3]{$F$}
\obj(18,-9)[4]{$\mcR(\mcL F)$} \mor{1}{2}{(C)} \mor{3}{4}{$\gamma(\eta_F)$}
\mor{1}{3}{(A)}[\atright, \solidarrow] \mor{2}{4}{(B)}
\enddc
\]
in the homotopy category, where $\gamma: \mcC \rightarrow \ho(\mcC)$ is the canonical localization.  We claim that the maps
labelled (A), (B) and (C) are isomorphisms.  This would make $\gamma(\eta_F)$ an isomorphism, so that $\eta_F$ is then a weak
equivalence.  In fact, (A) is readily seen to be an isomorphism:  since $X$ is necessarily a cofibrant diagram, the map
$\ds \ilhocolim_\alpha X_\alpha \rightarrow \ilcolim_\alpha X_\alpha = F$ is an isomorphism.

Next, we show that map (B) is an isomorphism in the homotopy category.  The key fact
is that $\mcL X$ is again a cofibrant diagram, so that $\ds \ilhocolim_\alpha \mcL X_\alpha \cong \ilcolim_\alpha
\mcL X_\alpha \cong \mcL F$.  Invoking Colimit-Product Coherence, upon evaluation at an object $b$ of
$\mcB$ we have a sequence of natural isomorphisms
\begin{eqnarray*}
\hocolim_\alpha \mcR(\mcL X_\alpha)(b) & = & \hocolim_\alpha \prod_{i \in \sk(\mcI \downarrow b)} \mcL X_\alpha(\dom(i))\\
 & = & \prod_{i \in \sk(\mcI \downarrow b)} \hocolim_\alpha \mcL X_\alpha(\dom(i))\\
 & = & \prod_{i \in \sk(\mcI \downarrow b)} \colim_\alpha \mcL X_\alpha(\dom(i))\\
 & = & \mcR(\mcL F)(b).
\end{eqnarray*}
Hence map (B) is an isomorphism.

All that remains to be shown is that map (C) is an isomorphism.  It suffices to prove that $X_\beta \rightarrow
\mcR(\mcL X_\beta)$ is a weak equivalence for each ordinal $\beta < \lambda$, and for this we argue by transfinite induction.  Fix
an ordinal $\beta$ and assume that $X_\alpha \rightarrow \mcR(\mcL X_\alpha)$ is a weak equivalence for each $\alpha <
\beta$.  There are two cases:  $\beta$ is a limit ordinal, or it is not.

In one case, $\beta$ is not a limit ordinal, so that $\beta = \alpha + 1$ for some $\alpha$.  By hypothesis, $X_\alpha
\rightarrow \mcR(\mcL X_\alpha)$ is a weak equivalence.  By examining statement (iv) above, we see that Proposition
\ref{pushout_equiv} implies that $X_\beta \rightarrow \mcR(\mcL X_\beta)$ is indeed a weak equivalence.

In the last case, $\beta$ is a limit ordinal, so that $\ds \ilcolim_{\alpha < \beta} X_\alpha \rightarrow X_\beta$ is
an isomorphism in $\mcC$.  An argument exactly like that above for map (B) shows that $\ds \ilhocolim_{\alpha < \beta} \mcR(\mcL X_\alpha)
\rightarrow \mcR(\mcL X_\beta)$ is an isomorphism in $\ho(\mcC)$.  Moreover, the inductive hypothesis implies that
the map $\ds \ilhocolim_{\alpha < \beta} X_\alpha \rightarrow \ilhocolim_{\alpha < \beta} \mcR(\mcL X_\alpha)$ is an isomorphism
as well.  We then have a commutative square
\[
\begindc{0}[3]
\obj(-18,9)[1]{$\ds \hocolim_{\alpha < \beta} X_\alpha$} \obj(18,9)[2]{$\ds \hocolim_{\alpha < \beta} \mcR(\mcL X_\alpha)$} \obj(-18,-9)[3]{$\ds X_\beta = \colim_{\alpha < \beta} X_\alpha$}
\obj(18,-9)[4]{$\mcR(\mcL X_\beta)$} \mor{1}{2}{$\simeq$} \mor{3}{4}{}
\mor{1}{3}{$\simeq$}[\atright, \solidarrow] \mor{2}{4}{$\simeq$}
\enddc
\]
in $\ho(\mcC)$ with isomorphisms as indicated.  Thus, $X_\beta \rightarrow \mcR(\mcL X_\beta)$ is a weak equivalence.  Transfinite
induction now shows that map (C) is an isomorphism, as desired.
\end{proof}

\begin{thm}\label{main_thm}
Suppose that $(\mcB, \mcA)$ is a conjugate pair of small categories and that $\mcC$ is a semi-stable model category.  Then the adjoint
pair
\[
\begindc{0}[3]
\obj(-12,0)[1]{$[\mcB^\op, \mcC]$}
\obj(12,0)[2]{$[\mcA^\op, \mcC]$}
\obj(-8,1)[3]{} \obj(8,1)[4]{} \obj(-8,-1)[5]{} \obj(8,-1)[6]{} \mor{3}{4}{$\mcL$} \mor{6}{5}{$\mcR$}
\enddc
\]
associated to the regular bimodule $\mcU_+: \mcA^\op \times \mcB \rightarrow \Sets_*$ is a Quillen equivalence.
\end{thm}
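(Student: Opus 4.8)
The plan is to verify the defining property of a Quillen equivalence directly. By Corollary~\ref{Quillen_pair} we already have a Quillen pair, so it remains to show that for every cofibrant $F$ in $[\mcB^\op,\mcC]$ and every fibrant $G$ in $[\mcA^\op,\mcC]$, a map $f\colon \mcL F \to G$ is a weak equivalence if and only if its adjoint transpose $\widetilde f\colon F \to \mcR G$ is. The whole point is that $\widetilde f$ factors as $\mcR f \circ \eta_F$, where $\eta_F\colon F \to \mcR\mcL F$ is the unit of the adjunction; this reduces the biconditional to the behavior of $\eta_F$ together with the behavior of $\mcR$ on $f$.

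First I would record the two facts that carry the argument. Since $F$ is cofibrant, Proposition~\ref{cofib_equiv} says $\eta_F$ is a weak equivalence. Since $\mcL$ is left Quillen, $\mcL F$ is cofibrant, so Corollary~\ref{detects} applies to the map $f\colon \mcL F \to G$ and says that $f$ is a weak equivalence precisely when $\mcR f$ is. With these in place it is a two-out-of-three chase: if $f$ is a weak equivalence then so is $\mcR f$, hence $\widetilde f = \mcR f \circ \eta_F$ is a composite of weak equivalences; conversely, if $\widetilde f$ is a weak equivalence then, since $\eta_F$ is one, two-out-of-three forces $\mcR f$ to be a weak equivalence, and then $f$ is a weak equivalence by Corollary~\ref{detects}. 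This establishes the biconditional and completes the proof. (The argument never uses fibrancy of $G$, so it in fact holds for all $G$; the stated criterion only asks for fibrant $G$.)

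The honest assessment of where the difficulty lies is that it is already behind us. The real obstacle was Proposition~\ref{cofib_equiv} --- that the unit is a weak equivalence on cofibrant functors --- which rests on the free-functor computation of Proposition~\ref{free_equiv}, hence on the Lower Triangular Axiom applied to a lower-triangular matrix of maps indexed by a linearization of the finite poset $\sk(\mcI\downarrow a)$, together with the pushout and transfinite-colimit inductions where Pushout-Product and Colimit-Product Coherence are exactly what is called for. The only point one must be careful about at the level of the final theorem is to resist running the naive derived-unit argument: $\mcL F$ need not be fibrant, so $\mcR$ need not send a fibrant replacement $\mcL F \to (\mcL F)^{\mathrm{fib}}$ to a weak equivalence. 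Corollary~\ref{detects} is precisely the device that circumvents this, since it is a reflection statement about $\mcR$ applied to maps out of a cofibrant object rather than about maps between fibrant ones; once that is noted, the deduction of Theorem~\ref{main_thm} is purely formal.
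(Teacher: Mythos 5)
Your proof is correct and follows essentially the same route as the paper's: factor $\tau^\#$ as $\mcR\tau \circ \eta_F$, invoke Proposition~\ref{cofib_equiv} to see that $\eta_F$ is a weak equivalence because $F$ is cofibrant, and finish by Corollary~\ref{detects} applied to $\tau\colon \mcL F \to G$ (you are right to note explicitly that $\mcL F$ is cofibrant since $\mcL$ is left Quillen, a detail the paper leaves implicit). The observations that fibrancy of $G$ is not needed and that the naive derived-unit argument would fail are accurate but ancillary; the substance of the argument is identical.
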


\begin{proof}
By the well-known criteria, it suffices to show that for each cofibrant functor $F: \mcB^\op \rightarrow \mcC$
and each fibrant functor $G: \mcA^\op \rightarrow \mcC$, a map $\tau: \mcL F \rightarrow G$ is a
weak equivalence if and only if its adjoint $\tau^\#: F \rightarrow \mcR G$ is as well.  The crucial observation
is that the adjoint $\tau^\#$ is simply the composite
\[
\tau^\#: F \stackrel{\eta_F}{\longrightarrow} \mcR(\mcL F) \stackrel{\mcR
\tau}{\longrightarrow} \mcR G.
\]
As $F$ is cofibrant, $\eta_F$ is then a weak equivalence.  Hence $\tau^\#$ is a weak equivalence if and only if
$\mcR\tau$ is, and the result follows immediately from Corollary~\ref{detects}.
\end{proof}

In the case that $\mcC$ is abelian and given the trivial model structure, the theorem reduces to the following.

\begin{cor}
Suppose that $\mcC$ is a complete and cocomplete abelian category.  Given any conjugate pair $(\mcB, \mcA)$, the adjoint
pair
\[
\begindc{0}[3]
\obj(-12,0)[1]{$[\mcB^\op, \mcC]$}
\obj(12,0)[2]{$[\mcA^\op, \mcC]$}
\obj(-8,1)[3]{} \obj(8,1)[4]{} \obj(-8,-1)[5]{} \obj(8,-1)[6]{} \mor{3}{4}{$\mcL$} \mor{6}{5}{$\mcR$}
\enddc
\]
associated to the regular bimodule is an equivalence of categories.
\end{cor}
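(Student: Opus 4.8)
The plan is to specialize Theorem~\ref{main_thm} to the trivial model structure on $\mcC$, where it degenerates to exactly the asserted equivalence. Recall that a complete and cocomplete abelian category $\mcC$ becomes a semi-stable model category once we declare the weak equivalences to be the isomorphisms, with every morphism both a fibration and a cofibration. The diagram categories $[\mcB^\op,\mcC]$ and $[\mcA^\op,\mcC]$ then carry the corresponding trivial model structure: every object is cofibrant and fibrant, and the weak equivalences are precisely the natural isomorphisms. Theorem~\ref{main_thm} applies, so the adjoint pair $(\mcL,\mcR)$ associated to the regular bimodule is a Quillen equivalence for these model structures.

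What remains is to translate ``Quillen equivalence'' into ``equivalence of categories.'' Since no cofibrant or fibrant replacements are ever needed here, the total derived functors of $\mcL$ and $\mcR$ are $\mcL$ and $\mcR$ themselves, and localizing at the class of isomorphisms changes nothing, so $\ho[\mcB^\op,\mcC] = [\mcB^\op,\mcC]$ and $\ho[\mcA^\op,\mcC] = [\mcA^\op,\mcC]$; a Quillen equivalence thus induces an adjoint equivalence of the original functor categories. Concretely, one checks that the unit and counit are natural isomorphisms. For the unit this is immediate from Proposition~\ref{cofib_equiv}: every $F$ is cofibrant, so $\eta_F \colon F \to \mcR(\mcL F)$ is a weak equivalence, hence an isomorphism. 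For the counit, I would apply the Quillen equivalence criterion used in the proof of Theorem~\ref{main_thm} with $F = \mcR G$ (cofibrant) and $\tau^\# = 1_{\mcR G}$; the corresponding map $\tau \colon \mcL(\mcR G) \to G$ is the counit $\varepsilon_G$, and since $\tau^\#$ is a weak equivalence so is $\tau = \varepsilon_G$, hence it too is an isomorphism. Therefore $(\mcL,\mcR)$ is an adjoint equivalence of categories.

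There is no genuine obstacle here: all of the real content is carried by Theorem~\ref{main_thm}, and the only remaining point is the elementary observation that a Quillen equivalence for a model structure whose weak equivalences are exactly the isomorphisms is the same thing as an adjoint equivalence of the underlying categories. The above bookkeeping with the unit and counit is simply the one-line verification of that fact in the present situation.
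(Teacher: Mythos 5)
Your proof is correct and takes the same route the paper intends: specialize Theorem~\ref{main_thm} to the trivial (semi-stable) model structure on the abelian category $\mcC$, note that the induced model structures on the diagram categories are likewise trivial, and observe that a Quillen equivalence between model categories whose weak equivalences are exactly the isomorphisms (and in which every object is cofibrant and fibrant) is precisely an adjoint equivalence of the underlying categories. The paper leaves this reduction implicit with the phrase ``the theorem reduces to the following''; your unit/counit bookkeeping is a clean way to make the one-line observation precise.
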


\begin{example}
Applying the theorem to Example~\ref{idem_example} gives the well-known fact that idempotents split in stable homotopy and
abelian categories.  This is not new of course, but it's satisfying to see this manifest itself here.
\end{example}

\begin{example}
Let $\mcC$ be any complete and cocomplete abelian category.  Applying the theorem to the conjugate pair $(\Gamma, \mcE)$,
we recover Pirashvili's first main result in \cite{TP}.  Taking $\mcC$ to be a stable model category satisfying our
technical assumptions, we confirm the conjecture that started this project:  the categories of functors indexed by $\Gamma^\op$
and $\mcE^\op$ are in fact Quillen equivalent when taking values in a stable model category.
\end{example}


In conclusion, we remark that this is not a complete Morita theory for semi-stable model categories of diagrams.  The best
possible result would be a complete characterization of the pairs $(\mcB, \mcA)$ yielding a Quillen equivalence.  We have
given sufficient---but not necessary---conditions for the existence of such pairs of categories.  Moreover,
classical Morita equivalence is characterized by finitely generated projective generators.  In our case, the regular bimodule is more than projective:  it
is free.  If the analogy is to be believed, one would think there must be a suitable projective version of this development.


\end{document}